\def\mvint_#1{\mathchoice
          {\mathop{\vrule width 6pt height 3 pt depth -2.5pt
                  \kern -9pt \intop}\limits_{\kern -3pt #1}}%
          {\mathop{\vrule width 5pt height 3 pt depth -2.6pt
                  \kern -6pt \intop}\nolimits_{#1}}%
          {\mathop{\vrule width 5pt height 3 pt depth -2.6pt
                  \kern -6pt \intop}\nolimits_{#1}}%
          {\mathop{\vrule width 5pt height 3 pt depth -2.6pt
                  \kern -6pt \intop}\nolimits_{#1}}}
\newcommand{\MM}{\mathcal M}
\newcommand{\bbbr}{\mathbb R}
\newcommand{\bbbb}{\mathbb B}
\newcommand{\bbbs}{\mathbb S}
\newcommand{\bbbz}{\mathbb Z}
\newcommand{\N}{\mathbb N}
\newcommand{\R}{\mathbb R}
\newcommand{\overbar}[1]{\mkern 1.7mu\overline{\mkern-1.7mu#1\mkern-1.5mu}\mkern 1.5mu}
\newcommand{\eps}{\varepsilon}
\newcommand{\tf}{\tilde{f}}
\def\diam{\operatorname{diam}}
\def\dist{\operatorname{dist}}
\DeclareMathOperator*{\osc}{osc}
\newtheorem{theorem}{Theorem}
\newtheorem*{theorem*}{Theorem}
\newtheorem{lemma}[theorem]{Lemma}
\newtheorem{corollary}[theorem]{Corollary}
\newtheorem{proposition}[theorem]{Proposition}
\theoremstyle{definition}
\newtheorem{remark}[theorem]{Remark}
\newtheorem*{remark*}{Remark}
\newtheorem{example}[theorem]{Example}
\newcommand{\Sph}{\mathbb S}
\title[Obstructions to continuity of Orlicz-Sobolev mappings]{Topological obstructions to continuity of Orlicz-Sobolev mappings of finite distortion}
\author[Goldstein]{Pawe\l{} Goldstein}
\address{Pawe\l{} Goldstein,\newline \indent Institute of Mathematics,\newline \indent Faculty of Mathematics, Informatics and Mechanics, \newline \indent University of Warsaw \newline \indent Banacha 2, 02-097 Warsaw, Poland}
\email{goldie@mimuw.edu.pl}
\thanks{P.G. was partially supported by NCN grant no 2012/05/E/ST1/03232.}
\author[Haj\l{}asz]{Piotr Haj\l{}asz}
\address{Piotr Haj\l{}asz,\newline \indent Department of Mathematics, University of Pittsburgh, \newline \indent 301 Thackeray Hall, Pittsburgh,
Pennsylvania 15260}
\email{hajlasz@pitt.edu}
\thanks{P.H. was supported by NSF grant.}
\keywords{Orlicz-Sobolev mappings, rational homology spheres, mappings of finite distortion}
\subjclass[2010]{Primary 30C65; Secondary 46E35, 58C07}
\begin{document}

\sloppy

\begin{abstract}
In the paper we investigate continuity of Orlicz-Sobolev mappings $W^{1,P}(M,N)$ of finite distortion between smooth Riemannian $n$-manifolds, $n\geq 2$, under the assumption that the Young function $P$ satisfies the so called divergence condition $\int_1^\infty P(t)/t^{n+1}\, dt=\infty$.
We prove that if the manifolds are oriented, $N$ is compact, and the universal cover of $N$ is
{\em not} a rational homology sphere, then such mappings are continuous. That includes mappings with $Df\in L^n$ and, more generally, mappings with $Df\in L^n\log^{-1}L$.
On the other hand, if the space $W^{1,P}$ is larger than $W^{1,n}$
(for example if $Df\in L^n\log^{-1}L$), and the universal cover of $N$ is homeomorphic to $\Sph^n$, $n\neq 4$, or is diffeomorphic to $\Sph^n$, $n=4$, then we construct an example of a mapping in
$W^{1,P}(M,N)$ that has finite distortion and is discontinuous.
This demonstrates a new {\em global-to-local phenomenon}: both finite distortion and continuity
are local properties, but a seemingly local fact that finite distortion implies continuity is a consequence of a global topological property of the target manifold $N$.
\end{abstract}

\maketitle
\section{Introduction}
Throughout  the paper we assume that $n\geq 2$.

Vodop'janov and Gol'd{\v{s}}te{\u\i}n \cite{VodoGold} proved that if a mapping $f:\Omega\to\bbbr^n$ of class $W^{1,n}$, defined on a domain $\Omega\subset\bbbr^n$,
has positive Jacobian, $J_f>0$, almost everywhere, then
$f$ is continuous (i.e. $f$ has a continuous representative).
In fact, Vodop'janov and Gol'd{\v{s}}te{\u\i}n proved a slightly stronger result, that $W^{1,n}$ mappings of finite distortion are continuous.

We say that a mapping $f\in W^{1,n}(\Omega,\bbbr^n)$, $\Omega\subset\bbbr^n$, has {\em finite distortion} if there is a function $K:\Omega\to [0,\infty)$ such that
$|Df(x)|^n\leq K(x)J_f(x)$ a.e. Taking $K(x)=|Df(x)|^n/J_f(x)$ we see that
$W^{1,n}$ mappings with almost everywhere positive Jacobian have finite distortion,
so continuity of mappings with finite distortion implies continuity of mappings with positive Jacobian.
The notion of finite distortion generalizes with no change to definition to mappings between oriented, smooth manifolds of the same dimension.

Recently, the result of Vodop'janov and Gol'd{\v{s}}te{\u\i}n has been extended to the case of mappings between manifolds \cite{GHP}.
\begin{theorem}
\label{T1}
Let $M$ and $N$ be smooth,  oriented, $n$-dimensional Riemannian manifolds without boundary and assume additionally that $N$ is compact.
If $f\in W^{1,n}(M,N)$ has finite distortion, then $f$ is continuous. In particular, if $f\in W^{1,n}(M,N)$ has positive Jacobian almost everywhere, then $f$
is continuous.
\end{theorem}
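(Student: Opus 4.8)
The plan is to prove the first, stronger assertion; the second is then immediate, since $J_f>0$ a.e.\ makes $f$ a map of finite distortion with $K(x)=|Df(x)|^n/J_f(x)$. Continuity is a local property of $M$, so I would fix $x_0\in M$, take a small geodesic ball $B(x_0,R)$ lying in a single smooth chart and identify it bi-Lipschitzly with a Euclidean ball $B^n\subset\R^n$; a bi-Lipschitz change of variables on the domain preserves the class $W^{1,n}$ and the distortion inequality (with a new, still finite, distortion function), so one may assume the domain is Euclidean. Using the orientations of $M$ and $N$ to make sense of $J_f$, the consequences of $|Df|^n\le K J_f$ that will actually be used are that $J_f\ge 0$ a.e., that $Df(x)=0$ for a.e.\ $x$ with $J_f(x)=0$, and the elementary bound $J_f\le C_n|Df|^n$. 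Finally, compactness of $N$ makes $f$ essentially bounded, so the essential oscillation of $f$ over $B(x_0,r)$ is finite and nonincreasing in $r$, and the goal becomes to show it tends to $0$ as $r\to 0$.

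The first step controls the traces of $f$ on small spheres centered at $x_0$. By Fubini in polar coordinates, for a.e.\ $r\in(0,R)$ the trace $f|_{S^{n-1}(x_0,r)}$ lies in $W^{1,n}(S^{n-1}(x_0,r),N)$ — its tangential derivative is dominated by $|Df|$, and $Df\in L^n(B^n)$ restricts to $L^n$ on a.e.\ sphere — and, since $n>n-1=\dim S^{n-1}$, Morrey's embedding on the sphere makes $f|_{S^{n-1}(x_0,r)}$ continuous, with $\diam f\bigl(S^{n-1}(x_0,r)\bigr)\le C\,r^{1/n}\bigl(\int_{S^{n-1}(x_0,r)}|Df|^n\bigr)^{1/n}$. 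Choosing in each dyadic annulus $B(x_0,2^{-j}R)\setminus B(x_0,2^{-j-1}R)$ a radius $r_j$ on which the spherical energy does not exceed its average over the annulus, the two powers of the radius cancel and $\diam f\bigl(S^{n-1}(x_0,r_j)\bigr)\le C\bigl(\int_{B(x_0,2^{-j}R)}|Df|^n\bigr)^{1/n}\to 0$. Hence, for all large $j$, the image $f(S^{n-1}(x_0,r_j))$ of the continuous representative lies in a geodesic ball of $N$ of radius tending to $0$; by compactness of $N$ this ball sits inside a smooth chart $\psi_j$ of $N$ whose size is bounded below independently of $j$, and $\int_{B(x_0,r_j)}J_f\le C_n\int_{B(x_0,2^{-j}R)}|Df|^n\to 0$.

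The heart of the proof is the interior estimate: for $j$ large, the essential image $f(B(x_0,r_j))$ should remain inside the chart $\psi_j$. Granting this, $\psi_j\circ f$ (with $\psi_j$ bi-Lipschitz on the relevant compact piece) is a $W^{1,n}$ map of finite distortion on $B(x_0,r_j)$ with values in $\R^n$ and continuous boundary data, so by the Euclidean theorem of Vodop'janov and Gol'd{\v{s}}te{\u\i}n \cite{VodoGold} it is continuous; hence $f$ is continuous on $B(x_0,r_j)$, and, $x_0$ being arbitrary, $f$ has a continuous representative. That the image does not escape the chart is a manifestation of the weak monotonicity forced by finite distortion, and the mechanism — as in \cite{VodoGold} — is topological rather than analytic: once $f|_{S^{n-1}(x_0,r_j)}$ is continuous, the degree $\deg(f,B(x_0,r_j),\cdot)$ is defined, it depends only on the boundary trace, and the Jacobian being a null Lagrangian gives $\int\deg(f,B(x_0,r_j),y)\,dy=\int_{B(x_0,r_j)}J_f\,dx$ (the smallness of the right-hand side ensures the image cannot wrap around $N$, so this localizes on a chart); feeding in $J_f\ge 0$ together with the vanishing of $Df$ on $\{J_f=0\}$ — which forbids the orientation-preserving ``folds'' that would otherwise let the image run away from its boundary trace — one controls the measure of the set where $f$ strays outside a fixed neighborhood of $f(S^{n-1}(x_0,r_j))$ by $\int_{B(x_0,r_j)}|Df|^n$, and lets $j\to\infty$.

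I expect this interior estimate to be the main obstacle; it is the manifold incarnation of the difficulty already present in the Euclidean theorem. In contrast with maps of bounded distortion (quasiregular maps), maps of finite distortion enjoy no reverse-H\"older inequality or higher integrability, and $K$ need not even be locally integrable, so Caccioppoli estimates and the maximum principle are unavailable and the essentially topological degree argument must replace them. What the manifold target adds is the need to confine the boundary traces $f(S^{n-1}(x_0,r_j))$, and through them the bulk of $f$, to a single chart of $N$ before this machinery — which speaks about $\R^n$-valued maps — can be applied, and the spherical Morrey estimate combined with compactness of $N$ is what achieves that confinement.
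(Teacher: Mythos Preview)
Your architecture is correct and matches the route the paper takes (Theorem~\ref{T1} itself is quoted from \cite{GHP}, but Theorem~\ref{T4} with $P(t)=t^n$ recovers it, and the proof of Theorem~\ref{T4} via Lemma~\ref{thm main} follows exactly your plan): use Morrey on spheres to trap the boundary trace in a small ball $D\subset N$, prove an interior estimate confining the essential image of the domain ball to $\overline D$, and then invoke the Euclidean theorem in a chart.

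The gap is in the interior estimate, and it is a real one. Your sketch asserts that the local degree $\deg(f,B,\cdot)$ ``depends only on the boundary trace'' and that a null-Lagrangian identity $\int_N\deg(f,B,y)\,dy=\int_B J_f$ holds; but for a \emph{compact} target $N$ the first claim is false --- two extensions of the same boundary map can differ by a map $\mathbb{S}^n\to N$ of nonzero degree --- and the second is exactly what must be proved, since there is no single chart in which to write the Jacobian as an honest divergence. Moreover you need the escaping set $A=f^{-1}(N\setminus\overline D)\cap B$ to have measure \emph{zero}, not merely measure controlled by $\int_B|Df|^n$, so ``letting $j\to\infty$'' does not close the argument. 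The paper's Lemma~\ref{thm main} supplies the missing mechanism: extend $f|_{\partial B}$ to a $W^{1,n}$ map on a thin outer annulus (Lemma~\ref{traces}), take $Ct_i$-Lipschitz approximations $g_i$ (Lemma~\ref{lemma approx}), observe that $g_i$ omits some ball $D'\subset N\setminus\overline D$ by a volume count, retract $N\setminus\overline{D'}$ onto $\overline D$ (Lemma~\ref{lem:retr}), and glue $g_i$ with $\pi\circ g_i$ into a Lipschitz map $\mathbb{S}^n\to N$ missing $D'$, hence of degree zero. This yields $\int_{g_i^{-1}(N\setminus\overline D)}J_{g_i}=0$ \emph{exactly}; passing $i\to\infty$ contradicts $\int_A J_f>0$, the latter proved via the auxiliary function $h=\dist(f,\overline D)\in W^{1,1}_0(B)$ --- which is precisely where finite distortion enters and which your sketch does not isolate.
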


In the Euclidean setting the result of Vodop'janov and Gol'd{\v{s}}te{\u\i}n has been extended to Orlicz-Sobolev spaces that are larger than the Sobolev space $W^{1,n}$, see
\cite{iwaniecko}, \cite[Theorem~7.5.2]{IwaniecM}.

\begin{theorem}
\label{T2}
Let $P$ be a Young function satisfying the doubling condition \eqref{doubling cond}, the growth condition \eqref{growth 2} and the divergence condition \eqref{div condition}.
If $f\in W^{1,P}(\Omega,\bbbr^n)$, $\Omega\subset\bbbr^n$, has finite distortion, then $f$ is continuous.
\end{theorem}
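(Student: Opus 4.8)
The plan is to prove a scale-invariant modulus-of-continuity estimate for a suitable representative of $f$ on small balls, via the spherical-oscillation method going back to Reshetnyak and Gehring, in the refined form needed for Orlicz--Sobolev mappings of finite distortion. Fix a ball $B=B(x_0,2R)\Subset\Omega$. Since $f\in W^{1,P}_{\mathrm{loc}}\subseteq W^{1,1}_{\mathrm{loc}}$, for almost every $\rho\in(0,2R)$ the trace $f|_{S_\rho}$ on the sphere $S_\rho=\partial B(x_0,\rho)$ lies in $W^{1,P}(S_\rho,\bbbr^n)$, and the hypotheses on $P$ --- notably the growth condition \eqref{growth 2} together with the divergence condition \eqref{div condition} --- guarantee that on an $(n-1)$-dimensional sphere $W^{1,P}$ embeds into $C^{0}$; hence $\omega(\rho):=\diam f(S_\rho)$ is finite for a.e.\ $\rho$ and satisfies a quantitative Orlicz--Morrey bound in terms of the tangential Orlicz energy $\int_{S_\rho}P(|D_Tf|)\,d\mathcal H^{n-1}$. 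Here finite distortion enters for the first time: since $|Df|^n\le K J_f$ forces $J_f\ge0$ a.e., the local topological degree of $f$ on $B(x_0,\rho)$ is nonnegative, so, by a standard linking argument, the essential oscillation of $f$ over $B(x_0,\rho)$ is comparable to $\omega(\rho)$, and $\omega$ is essentially nondecreasing. Thus it suffices to show $\omega(\rho)\to0$ as $\rho\to0$, uniformly for $x_0$ in compact subsets.

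For the quantitative step one couples the spherical bound with finite distortion and the coarea and area formulas. From $|Df|^n\le KJ_f$ one has $|Df|^n/K\le J_f$, and the area formula
\[
\int_{B(x_0,\rho)}J_f(x)\,dx=\int_{\bbbr^{n}}\deg\big(f,B(x_0,\rho),y\big)\,dy<\infty
\]
(recall that $J_f\in L^1_{\mathrm{loc}}$ and $J_f\ge0$ a.e.\ for a mapping of finite distortion) shows that the two quantities $\int_{B}P(|Df|/\lambda)\,dx$, finite for some $\lambda>0$ since $f\in W^{1,P}_{\mathrm{loc}}$, and $M(\rho):=\int_{B(x_0,\rho)}J_f$ are finite, with $M$ nondecreasing. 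Combining the spherical estimate with these ingredients and integrating in $\rho$ yields an inequality of the schematic form
\[
\int_r^R\Lambda\big(\omega(\rho)\big)\,\frac{d\rho}{\rho}\le C\Big(M(2R)+\textstyle\int_{B}P(|Df|/\lambda)\,dx\Big),
\]
where $\Lambda$ is an increasing function built from $P$ and $n$ through the very integral in the divergence condition --- and it is exactly that condition which makes $\Lambda$ a genuine modulus with $\Lambda(0^{+})=0$. Since $\omega$ is essentially nondecreasing, the left-hand side dominates $c\,\Lambda(\omega(r))\log(R/r)$, so $\omega(r)\le\Lambda^{-1}\!\big(C/\log(R/r)\big)\to0$ as $r\to0$, with an explicit (in general extremely slow) modulus of continuity; were $\int_1^\infty P(t)t^{-n-1}\,dt$ to converge, the same computation would only bound $\omega$, consistent with the discontinuous finite-distortion mappings that exist below the divergence threshold.

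The principal obstacle is that the distortion function $K=|Df|^n/J_f$ is assumed only to be finite almost everywhere: it need not be bounded, nor lie in any $L^p_{\mathrm{loc}}$, so the device that settles the quasiregular case --- writing $|Df|^{n-1}=(|Df|^n/K)^{(n-1)/n}K^{(n-1)/n}\le J_f^{(n-1)/n}K^{(n-1)/n}$ and integrating the $K$-factor --- is unavailable. The way around this, and the technical heart of the argument, is never to estimate $K$ in isolation: wherever $|Df|$ is large one absorbs the relevant powers of $|Df|$ into $P(|Df|/\lambda)$ using the growth condition \eqref{growth 2}, wherever $|Df|$ is bounded the estimates are trivial, and the distortion is used only through $|Df|^n/K\le J_f$, so that $K$ drops out and only the two finite masses $M(2R)$ and $\int_B P(|Df|/\lambda)\,dx$ remain. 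The doubling condition \eqref{doubling cond} is what legitimises these Orlicz manipulations --- in particular the passage between $P(|Df|)$, $P(|Df|/\lambda)$ and their truncations, and the freedom to rescale the Luxemburg parameter $\lambda$ --- without loss of control of the constants.
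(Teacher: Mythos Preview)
The paper does not prove Theorem~\ref{T2}; it is quoted from \cite{iwaniecko} and \cite[Theorem~7.5.2]{IwaniecM} and invoked only at the end of the proof of Theorem~\ref{T4}, once the problem has been localized to a Euclidean chart. So there is no in-paper argument to compare against.

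Your sketch has the right architecture --- spherical oscillation bounds, monotonicity, and the divergence condition yielding a modulus of continuity --- but there is a circularity. You invoke the topological degree $\deg(f,B(x_0,\rho),y)$ and the area formula before continuity of $f$ is established; even granting that $f|_{S_\rho}$ is continuous for a.e.\ $\rho$, the step ``$J_f\ge 0$ a.e.\ $\Rightarrow$ degree nonnegative'' requires identifying the pointwise Jacobian with a degree integral, which in the sub-$W^{1,n}$ regime is itself part of the theorem, not a tool one may assume. Likewise, ``recall that $J_f\in L^1_{\mathrm{loc}}$'' is not automatic: since $P(t)$ may grow more slowly than $t^n$, the bound $|J_f|\le|Df|^n$ gives nothing, and the paper's own counterexamples in Section~\ref{sec disc} have non-integrable Jacobian. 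In the Euclidean case $J_f\in L^1_{\mathrm{loc}}$ does hold, but via a separate argument exploiting the null-Lagrangian structure of the Jacobian in $\bbbr^n$, not via the area formula. The literature proof avoids degree until after continuity: one first shows that each coordinate $f^j$ is \emph{weakly monotone} (essential oscillation on a ball bounded by oscillation on its boundary) by a test-function argument using the divergence-free cofactor matrix together with $J_f\ge 0$; only then does the spherical oscillation estimate, essentially as in your last paragraph, deliver the modulus of continuity. Your explanation of why boundedness of $K$ is unnecessary and of the role of \eqref{div condition} is accurate; what must be repaired is the order of dependencies.
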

For more information about Orlicz-Sobolev spaces, see Section~\ref{S2}.
If $P(t)=t^n$, then $W^{1,P}=W^{1,n}$, so the result of Vodop'janov and Gol'd{\v{s}}te{\u\i}n follows from Theorem~\ref{T2}. Perhaps the most important Young function, other than $P(t)=t^n$, satisfying the assumptions of Theorem~\ref{T2} is $P(t)=\frac{t^n}{\log(e+t)}$. The corresponding Orlicz-Sobolev space $W^{1,P}$ is larger than $W^{1,n}$. If $\Omega$ is bounded and has smooth boundary, then the space $W^{1,P}(\Omega)$ consists of functions such that
$$
|Df|\in L^n\log^{-1}L,
\quad
\text{that is}
\quad
\int_\Omega \frac{|Df|^n}{\log (e+|Df|)}<\infty.
$$
This is a much weaker condition than $L^n$ integrability of $|Df|$.

Orlicz-Sobolev spaces with Young functions satisfying the assumptions of Theorem~\ref{T2}
($L^n\log^{-1} L$ in particular) turn out to be critical in the study of regularity of distributional Jacobians, degree theory, properties of monotone mappings and the theory of Hardy-Orlicz spaces \cite{GH_RHS,grecoiss,HIMO,iwaniecko,IwaniecM,iwaniecs}.

In this paper we address the problem of extending Theorem~\ref{T2} to the case of Orlicz-Sobolev mappings
between manifolds.
Clearly, we want the Young function to satisfy the conditions described in Theorem~\ref{T2}, but since we
want the Orlicz-Sobolev space to be larger than the Sobolev space $W^{1,n}$, we impose one more growth
condition \eqref{growth 1a}. It turns out that in the case of manifolds, the answer to the question whether
$W^{1,P}(M,N)$ mappings of finite distortion (or even with almost everywhere positive Jacobian) are continuous depends on delicate topological properties of the target manifold
$N$. The main result of the paper reads as follows.
\begin{theorem}
\label{T3}
Let $M$ and $N$ be smooth,  oriented, $n$-dimensional Riemannian manifolds without boundary and assume additionally that $N$ is compact.
Assume that a Young function $P$ satisfies conditions \eqref{doubling cond}, \eqref{growth 1a},
\eqref{growth 2} and \eqref{div condition}.
\begin{itemize}
\item
If the universal cover of $N$ is not a rational homology sphere, then $W^{1,P}(M,N)$ mappings of finite distortion (in particular mappings with almost everywhere positive Jacobian) are continuous.
\item
If the universal
cover of $N$ is homeomorphic (when $n\neq 4$) or diffeomorphic (when $n=4$) to $\Sph^n$, then there are mappings in  $W^{1,P}(M,N)$ of finite distortion that are discontinuous.
If in addition $M=\bbbb^n$ is a Euclidean ball, one can construct a discontinuous mapping in $W^{1,P}(\bbbb^n,N)$ that has
almost everywhere positive Jacobian.
\end{itemize}
\end{theorem}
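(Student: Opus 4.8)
The plan is to treat the two bullets separately; in each, the task is to turn a borderline local analytic fact into a global topological statement about $N$.

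\textbf{Reducing the first bullet.} Since continuity is local and $N$ embeds isometrically in some $\R^L$, it suffices to show: if $f\in W^{1,P}(\bbbb^n,N)$ has finite distortion and is discontinuous at a point, which we take to be $0$, then $\tilde N$ is a rational homology sphere. Finite distortion forces $J_f\ge 0$ a.e.\ (from $0\le|Df|^n\le K J_f$ with $K\ge 0$), so the pullback $f^\#\omega_N$ of a volume form on $N$ normalised by $\int_N\omega_N=1$ is a nonnegative $n$-form, and under \eqref{div condition} together with the growth conditions $J_f\in L^1_{\mathrm{loc}}$ by the regularity theory of Jacobians in the $L^n\log^{-1}L$ range, so $f^\#\omega_N\in L^1_{\mathrm{loc}}$. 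Moreover, were $f(\bbbb^n(0,r))$ contained in one coordinate chart of $N$ for some (hence all small) $r$, then composing with that chart would exhibit $f$ near $0$ as a Euclidean map of finite distortion, continuous by Theorem~\ref{T2}; so $f(\bbbb^n(0,r))$ is never chart-sized and the cluster set of $f$ at $0$ is a nondegenerate continuum.

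\textbf{The bubble and the topological conclusion.} The core of the argument is to convert this into a continuous map $g\colon\Sph^n\to N$ of nonzero degree. Along a sequence $\rho_k\downarrow 0$ the traces $f|_{\partial\bbbb^n(0,r)}$ are continuous (they are Sobolev maps on $S^{n-1}$, on which \eqref{div condition} makes $P$ supercritical) with vanishing spherical energy, hence eventually null-homotopic in $N$; the annular pieces $f|_{\{\rho_{k+1}\le|x|\le\rho_k\}}$ link consecutive traces inside $N$; and, since the nonnegative form $f^\#\omega_N$ cannot cancel, a fixed positive amount of pulled-back volume stays concentrated at $0$. A rescaling/bubbling argument assembles these pieces into a continuous map $g\colon\Sph^n\to N$ carrying all of this concentrated degree, so $\deg g\ne 0$. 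I expect this step — asserting that the concentrated degree is genuinely realized, no degree being lost in the limit $r\to 0$, which is exactly where nonnegativity of $J_f$ and the borderline nature of \eqref{div condition} are essential — to be the main obstacle; it is the Orlicz--Sobolev counterpart of energy concentration in the theory of harmonic maps. Given $g$, a purely topological lemma finishes: $\Sph^n$ is simply connected ($n\ge 2$), so $g$ lifts to $\tilde g\colon\Sph^n\to\tilde N$; if $\pi_1(N)$ were infinite then $\tilde N$ would be a noncompact $n$-manifold with $H_n(\tilde N;\mathbb{Q})=0$, forcing $\deg g=0$, so $\pi_1(N)$ is finite and $\tilde N$ is a closed oriented $n$-manifold with $\deg\tilde g\ne 0$. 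For $0<k<n$ and $\alpha\in H^k(\tilde N;\mathbb{Q})$, Poincar\'e duality gives $\beta$ with $\alpha\smile\beta$ generating $H^n(\tilde N;\mathbb{Q})$; evaluating $\tilde g^*(\alpha)\smile\tilde g^*(\beta)=\tilde g^*(\alpha\smile\beta)$ on $[\Sph^n]$ yields $(\deg\tilde g)\langle\alpha\smile\beta,[\tilde N]\rangle\ne 0$, impossible since $\tilde g^*\alpha\in H^k(\Sph^n;\mathbb{Q})=0$. Hence $H^*(\tilde N;\mathbb{Q})\cong H^*(\Sph^n;\mathbb{Q})$, contradicting the hypothesis, so $f$ is continuous.

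\textbf{The discontinuous example.} For the second bullet it suffices to construct on a ball a discontinuous finite-distortion map in $W^{1,P}$ into $\Sph^n$ — with $J_f>0$ a.e.\ when the ball is all of $\bbbb^n=M$ — and push it forward by the universal covering $p\colon\tilde N\to N$, a local diffeomorphism: $\tilde N$ is diffeomorphic to $\Sph^n$ for $n\le 3$ and (by hypothesis) for $n=4$, and for $n\ge 5$ one needs only that $\tilde N\setminus\{\mathrm{pt}\}$ is diffeomorphic to $\R^n$; composition with $p$ preserves finite distortion, positivity of $J_f$, and discontinuity. The model on $\bbbb^n$ is supported near $0$ on a \emph{sparse} sequence of fat concentric annuli at scales $\rho_k\downarrow 0$: on the $k$-th one $f$ winds once around $\Sph^n$ with $|Df|\approx\rho_k^{-1}$, contributing $\approx P(\rho_k^{-1})\rho_k^n$ to $\int P(|Df|)$ and $\approx 1$ to $\int|Df|^n$, while the long intermediate annuli are filled by slowly varying, orientation-preserving local diffeomorphisms of small image and negligible $P$-energy. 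Since $P(t)/t^n\to 0$ (condition \eqref{growth 1a}) one may take the $\rho_k$ to decay fast enough that $\sum_k P(\rho_k^{-1})\rho_k^n<\infty$, whence $\int_{\bbbb^n}P(|Df|)<\infty$, while $\int_{\bbbb^n}|Df|^n\approx\sum_k 1=\infty$ as it must be by Theorem~\ref{T1}; the sparsity is exactly what the divergence condition compels. Finite distortion holds because each piece is smooth with $|Df|^n\le K J_f$ and $K$ finite a.e.\ (blowing up only along the measure-zero interface spheres), and the successive windings are matched there via an orientation-reversing reflection of the $S^{n-1}$-factor, so that every piece has local degree $+1$ and hence $J_f\ge 0$ — in fact $>0$ a.e.\ on $\bbbb^n$ (for general $M$ one transplants the model into a coordinate ball and extends by a constant, losing only the positivity of $J_f$ outside that ball). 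Discontinuity at $0$ is immediate: $f(\bbbb^n(0,r))=\Sph^n$ for every small $r>0$.
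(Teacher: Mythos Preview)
Your treatment of the second bullet is essentially the paper's construction: an infinite stack of concentric annuli near a point, each carrying a full wrap around $\Sph^n$, with the $P$-energy made summable precisely by \eqref{growth 1a}, then transported to $N$ through the covering map. The details differ slightly (the paper keeps the map equal to the identity on the buffer slices and wraps \emph{twice} on the active ones, rather than inserting ``slowly varying'' fills), but the mechanism is the same.

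The first bullet, however, has a genuine gap. You assert that $J_f\in L^1_{\rm loc}$ follows from ``the regularity theory of Jacobians in the $L^n\log^{-1}L$ range''. That theory is for maps $\Omega\to\bbbr^n$; here the target is $N$, and---as you yourself note a few lines later---near the putative discontinuity point $f$ does not land in any single coordinate chart, so you cannot reduce to Euclidean Jacobian estimates. In fact the claim is false in your contrapositive setup: the very maps you build for the second bullet are $W^{1,P}$ finite-distortion maps into a manifold with $J_f\notin L^1_{\rm loc}$ (each annular wrap contributes volume $\approx |\Sph^n|$, and there are infinitely many). So once you assume $f$ discontinuous, integrability of $J_f$ is exactly what you are \emph{not} entitled to. This then breaks the bubbling step: the ``fixed positive amount of pulled-back volume concentrated at $0$'' is actually infinite, and there is no concentration-compactness principle available to extract a nonzero-degree bubble $g\colon\Sph^n\to N$.

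The paper avoids this by working directly rather than by contraposition, and by making local integrability of $J_f$ the \emph{conclusion} of the topological hypothesis rather than an input. First (Theorem~\ref{T5}): on a small ball $\bbbb^n$ with good boundary trace, extend $f|_{\partial\bbbb^n}$ to $g\in W^{1,n}(\bbbb^n,N)$ via Lemma~\ref{traces}, glue $f$ and $g$ along the equator of $\Sph^n$, and approximate the resulting $F\in W^{1,P}(\Sph^n,N)$ by $Ct_i$-Lipschitz maps $F_i$ with $t_i^n|\{\MM|DF|>t_i\}|\to 0$ (Lemma~\ref{lemma approx}). Since $\tilde N$ is not a rational homology sphere, every such $F_i$ has degree zero (Lemma~\ref{lem: RHS}), so $\int_{\Sph^n_+}J_{F_i}=-\int_{\Sph^n_-}J_{F_i}$; the right side converges to $-\int J_g$ (finite, since $g\in W^{1,n}$), and monotone convergence on the left, using $J_f\ge 0$, gives $\int_{\bbbb^n}J_f<\infty$. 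Second (Theorem~\ref{T4}): with $J_f\in L^1_{\rm loc}$ in hand, a separate argument shows small balls have images of small diameter, reducing continuity to the Euclidean Theorem~\ref{T2}. The topological assumption enters only through $\deg F_i=0$; no bubbling is needed.
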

\begin{remark}
In fact in the first part (continuity) of Theorem~\ref{T3} we do not need the growth condition \eqref{growth 1a}, but a weaker condition \eqref{growth 1b}, ensuring that $W^{1,n}(M,N)\subset W^{1,P}(M,N)$ (see Theorems~\ref{T4} and~\ref{T5}). The growth condition \eqref{growth 1a} is only needed for the construction of a counterexample -- it guarantees that the space $W^{1,P}$ is strictly larger than $W^{1,n}$ and for $W^{1,n}$ mappings continuity is always
guaranteed by Theorem~\ref{T1}.
\end{remark}

We say that a compact $n$-manifold without boundary is a {\em rational homology sphere},
if it has the same deRham cohomology as the standard sphere $\Sph^n$. Rational homology spheres
were investigated in \cite{GH_RHS,grecoiss,HIMO} in the context of the degree theory of Orlicz-Sobolev
mappings.
Quasiregular mappings and mappings of finite distortion with values into rational homology spheres have also been studied in \cite{bonkh,onninenp}.
For more information about rational homology spheres, see Section~\ref{RHS}.

It follows from the Poincar\'e conjecture (when $n=3,4$) that
in dimensions $n=2,3,4$ simply connected rational homology spheres are homeomorphic (but for $n=4$ not necessarily diffeomorphic) to $\Sph^n$, so Theorem~\ref{T3} completely solves the
problem in dimensions $2$ and $3$. In dimension $4$ the situation is complicated by the possible existence of {\em exotic spheres}, see the discussion in Section \ref{sec disc}. However, in higher dimensions there is a gap, because
there are many examples of simply connected rational homology spheres that are not spheres, see Section~\ref{RHS}.

In geometry, the {\em local-to-global principle} means that: local properties of mappings imply their global properties.
However, Theorem~\ref{T3} shows a new, dual {\em global-to-local phenomenon}: both having  finite distortion (or positive Jacobian) and continuity
are local properties, but a seemingly local fact that finite distortion implies continuity is a consequence of a global topological property
of the target manifold $N$.

If the Young function $P$ satisfies the assumptions of Theorem~\ref{T3}, the derivative of a mapping $f\in W^{1,P}(M,N)$ does not necessarily belong to $L^n$
and hence there is no apparent reason why the Jacobian $J_f$ should be integrable. In fact, the discontinuous mappings discussed in the second part of Theorem~\ref{T3}
do not have integrable Jacobian. However, if we know that the Jacobian $J_f$ is integrable, then continuity of $f$ follows without any topological assumptions about $N$.
This is the second main result of the paper.
\begin{theorem}
\label{T4}
Let $M$ and $N$ be smooth,  oriented, $n$-dimensional Riemannian manifolds without boundary and assume additionally that $N$ is compact.
Assume that a Young function $P$ satisfies conditions \eqref{doubling cond}, \eqref{growth 1b}, \eqref{growth 2} and \eqref{div condition}.
If $f\in W^{1,P}(M,N)$ has finite distortion and $J_f\in L^1_{\rm loc}(M)$, then $f$ is continuous.
\end{theorem}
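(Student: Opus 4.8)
The plan is to reduce the assertion to a local statement and then, following the method used for the Sobolev case in \cite{GHP}, to prove that $f$ is monotone in a generalised sense; the hypothesis $J_f\in L^1_{\rm loc}(M)$ will play the role that a topological restriction on $N$ plays in the first part of Theorem~\ref{T3}. Since continuity is a local property and $M,N$ are smooth, it suffices to fix $x_0\in M$, choose a chart identifying a neighbourhood of $x_0$ with a Euclidean ball $B=B(x_0,R)\subset\R^n$ (with the Riemannian metric there comparable to the Euclidean one), and produce a continuous representative of $f$ on a smaller ball; the representatives obtained this way at all points of $M$ then patch together, as each coincides a.e.\ with $f$. Since $N$ is compact there is $\delta_0>0$ such that every subset of $N$ of diameter $<\delta_0$ lies in a coordinate neighbourhood bi-Lipschitz equivalent to a Euclidean ball. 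Note also that by the definition of finite distortion $J_f\ge0$ a.e., so $f$ is sense-preserving. Finally, by Fubini's theorem $f|_{\partial B(x_0,r)}\in W^{1,P}(\partial B(x_0,r),N)$ for a.e.\ $r\in(0,R)$, and the hypotheses on $P$ guarantee the Orlicz--Sobolev embedding $W^{1,P}(\partial B(x_0,r))\hookrightarrow C^0(\partial B(x_0,r))$, so that for such $r$ the set $E_r:=f(\partial B(x_0,r))$ is a compact connected subset of $N$; put $\omega(r):=\diam_N E_r=\osc_{\partial B(x_0,r)}f$.

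The heart of the matter is to show that $f$ is monotone near $x_0$: for a.e.\ $r$ with $\omega(r)<\delta_0$, the essential oscillation of $f$ over $B(x_0,r)$ obeys $\osc_{\overline{B(x_0,r)}}f\le C\,\omega(r)$. When $\omega(r)<\delta_0$ one may work in a bi-Lipschitz chart of $N$ containing $E_r$; there the Brouwer degree $\deg(f,B(x_0,r),\cdot)$ is well defined off $E_r$ (since $f|_{\partial B(x_0,r)}$ is continuous), nonnegative (since $J_f\ge0$), constant on the components of the complement of $E_r$, and it satisfies the degree--area identity $\int_{f^{-1}(V)\cap B(x_0,r)}J_f=\int_V\deg(f,B(x_0,r),y)\,dy$ for open sets $V$ -- this identity being valid \emph{precisely} because $J_f\in L^1_{\rm loc}$. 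Summation yields $\int_N\deg(f,B(x_0,r),y)\,dy=\int_{B(x_0,r)}J_f$, which tends to $0$ as $r\to0$ by absolute continuity of the integral; since $\deg$ is integer-valued and nonnegative, for all small $r$ it vanishes outside a union of components of the complement of $E_r$ of total measure $o(1)$, and in particular on the unbounded component. On a component $V$ carrying zero degree, the degree--area identity gives $\int_{f^{-1}(V)\cap B(x_0,r)}J_f=0$, so $J_f=0$ and hence $Df=0$ by finite distortion a.e.\ on $f^{-1}(V)\cap B(x_0,r)$; as in \cite{GHP}, combining this with the connectedness of the essential image of a ball shows that $f$ essentially avoids $V$, whence the essential image of $B(x_0,r)$ is trapped in the filling $\widehat{E_r}$ of $E_r$, a set of diameter $\le C\,\omega(r)$. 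This is the asserted monotonicity. By contrast, when $J_f\notin L^1$ the degree need not integrate to a finite number and may be nonzero on a component far from $E_r$; which components can carry nonzero degree is then governed by the topology of $N$, and this is exactly the mechanism behind the discontinuous examples in Theorem~\ref{T3}.

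It remains to see that $\omega(r)\to0$ as $r\to0^+$ (a statement along a sequence of radii already suffices, by the monotonicity of $r\mapsto\osc_{B(x_0,r)}f$ together with the bound just proved). Granting this, $\osc_{B(x_0,r)}f\to0$, so for small $r$ the image of $f|_{B(x_0,r)}$ lies in a bi-Lipschitz chart of $N$; there $f$ becomes a Euclidean mapping of finite distortion in $W^{1,P}$, and Theorem~\ref{T2} supplies the continuous representative. To prove $\omega(r)\to0$ I would bound $\omega(r)$, via the Orlicz--Sobolev--Poincar\'e inequality on $\partial B(x_0,r)$, by the $P$-energy of $Df$ on thin annuli about $\partial B(x_0,r)$, and then telescope over a suitable sequence of radii; the divergence condition \eqref{div condition} is precisely what makes the relevant series diverge and so forces the oscillations to decay -- the same mechanism that underlies Theorem~\ref{T2}.

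The step I expect to be the main obstacle is the monotonicity claim: for a genuine Sobolev map not yet known to be continuous, one must show that the essential image of a ball is trapped in the filled image of its bounding sphere. This is where degree theory in the Sobolev category enters, it is where the hypothesis $J_f\in L^1_{\rm loc}$ is genuinely needed, and it calls for care with continuity of $f$ on almost every sphere, with approximation, and with stability of the degree under $W^{1,P}$-convergence. By comparison, both the estimate $\omega(r)\to0$ and the transport of the local problem into the Euclidean Theorem~\ref{T2} are more routine, following the Orlicz-Sobolev theory already in place.
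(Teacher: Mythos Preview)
Your overall architecture matches the paper's: both reduce to showing that the image of a sufficiently small ball has small diameter (the paper's Lemma~\ref{thm main}), after which the problem becomes Euclidean and Theorem~\ref{T2} finishes it. You also correctly use small oscillation on a positive-measure set of spheres (Lemma~\ref{lem:Eucl}~d)) and the $h=\dist(f,\overbar{D})$ trick (if $|A|>0$ then $\int_A J_f>0$).

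The difference, and the gap, is in how you obtain monotonicity. You assert that the degree--area identity $\int_{f^{-1}(V)\cap B}J_f=\int_V\deg(f,B,y)\,dy$ holds ``precisely because $J_f\in L^1_{\rm loc}$''. But the local degree is fixed by the continuous trace $f|_{\partial B}$, whereas the left side is an analytic integral over the interior, where $f$ is only $W^{1,P}$ and not known to be continuous; neither Lusin's condition~(N) nor weak continuity of the Jacobian is available in this class a priori, and integrability of $J_f$ alone does not furnish the identity. Any proof must pass through Lipschitz approximations whose boundary traces are under control --- and this is exactly the paper's route. It extends $f|_{\partial B}$ in $W^{1,n}$ to a thin outer annulus (via Lemma~\ref{traces}), so that the $Ct_i$-Lipschitz approximations $g_i$ of Lemma~\ref{lemma approx} agree with the smooth extension on an intermediate sphere; then, using $\int|J_g|<|N\setminus D|$, it finds a ball $D'\subset N\setminus\overbar{D}$ missed by $g_i$, applies a Lipschitz retraction $\pi\colon N\setminus\overbar{D}'\to\overbar{D}$ (Lemma~\ref{lem:retr}), glues $g_i$ and $\pi\circ g_i$ over a sphere to obtain a non-surjective, hence degree-zero, map into $N$, and reads off $\int_{g_i^{-1}(N\setminus\overbar{D})}J_{g_i}=0$, which contradicts $\int_A J_f>0$ in the limit. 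In short, the paper \emph{proves} only the fragment of your degree identity that is actually needed, by an explicit construction, rather than quoting a Sobolev-degree formula that is not available off the shelf.

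Two small points. There is no ``unbounded component'', since $N$ is compact; you mean the component of $N\setminus E_r$ containing $N\setminus\overbar{D}$. And the step ``connectedness of the essential image of a ball shows that $f$ essentially avoids $V$'' is not a general fact about Sobolev maps; it is precisely the $h\in W^{1,P}_0(B)$ argument, which you should write out.
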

Now the first part (continuity) of Theorem~\ref{T3} follows from Theorem~\ref{T4} and the last main result.
\begin{theorem}
\label{T5}
Let $M$ and $N$ be smooth,  oriented, $n$-dimensional Riemannian manifolds without boundary and assume that $N$ is compact and the universal cover of $N$ is not a rational homology sphere.
If a Young function $P$ satisfies conditions \eqref{doubling cond}, \eqref{growth 1b}, \eqref{growth 2}, \eqref{div condition} and $f\in W^{1,P}(M,N)$
has non-negative Jacobian (in particular if $f$ has finite distortion), then the Jacobian $J_f$ is locally integrable, $J_f\in L^1_{\rm loc}(M)$.
\end{theorem}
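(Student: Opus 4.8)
The plan is to pass to a local assertion, lift $f$ through the universal covering of $N$ in order to replace the (non‑exact) volume form of $N$ by a form with a controlled primitive, and then deduce local integrability of the Jacobian from the sharp Orlicz–Sobolev theory of distributional Jacobians at the divergence threshold. Concretely, I would fix a coordinate ball $B=B(x_0,R)\Subset M$; since $J_f\ge 0$, the map $r\mapsto\int_{B(x_0,r)}J_f$ is nondecreasing, so it suffices to show $\int_{B(x_0,r)}J_f<\infty$ for a.e. $r<R$. As $B$ is simply connected, $f|_B$ lifts through the universal covering $\pi\colon\widetilde N\to N$ to a map $\tilde f\colon B\to\widetilde N$ in the same Orlicz–Sobolev class (for this one uses the quasicontinuity and removable‑singularity properties that the divergence condition on $P$ supplies); since $\pi$ is a local isometry, $|D\tilde f|=|Df|$ and $J_{\tilde f}=J_f\ge 0$ a.e., and the standing assumptions on $P$ (doubling plus the divergence condition) give $|Df|\in L^{n-1}_{\mathrm{loc}}(B)$ together with the pull‑back and Orlicz–Hölder identities used below.

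The topology of $N$ enters through a decomposition of $\widetilde\omega:=\pi^*\omega$, where $\omega$ is the normalized Riemannian volume form of $N$. Because $\widetilde N$ is \emph{not} a rational homology sphere, there are two cases. If $\pi_1(N)$ is infinite, then $\widetilde N$ is a noncompact connected $n$‑manifold, so $H^n_{\mathrm{dR}}(\widetilde N)=0$ and $\widetilde\omega=d\eta$ for a smooth $(n-1)$‑form $\eta$; carrying out the construction of $\eta$ compatibly with the cocompact deck action, one can arrange $|\eta(y)|\le C\bigl(1+\dist_{\widetilde N}(y,y_0)\bigr)$, i.e. at most linear growth. If $\pi_1(N)$ is finite, then $\widetilde N$ is compact, and failing to be a rational homology sphere it has $H^k_{\mathrm{dR}}(\widetilde N)\ne 0$ for some $0<k<n$; Poincaré duality then supplies smooth closed forms $\alpha\in\Omega^k(\widetilde N)$ and $\beta\in\Omega^{n-k}(\widetilde N)$ with $[\alpha\wedge\beta]=[\widetilde\omega]$, so $\widetilde\omega=d\eta+\alpha\wedge\beta$ with $\eta$ smooth on a compact manifold, hence bounded, and $\alpha,\beta$ bounded. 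In both cases one gets
\[
\widetilde\omega=d\eta+\alpha\wedge\beta,\qquad |\eta(y)|\le C\bigl(1+\dist_{\widetilde N}(y,y_0)\bigr),
\]
with $\alpha,\beta$ bounded closed forms of positive degree (the term $\alpha\wedge\beta$ absent when $\widetilde N$ is noncompact). This is the precise place where the hypothesis is used: it is exactly the obstruction to the volume concentration (bubbling) that produces non‑integrable Jacobians over targets such as $\Sph^n$.

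Finally, $\tilde f^*\widetilde\omega=J_f\,dx$ pointwise, while $\tilde f^*\alpha$ is weakly closed, so by the Poincaré lemma with estimates $\tilde f^*\alpha=d\xi$ for a $(k-1)$‑form $\xi$ on $B$ with $\|\xi\|_{W^{1,(n-1)/k}}\lesssim\|D\tilde f\|_{L^{n-1}}$; since $\beta$ is closed this gives $\tilde f^*(\alpha\wedge\beta)=d(\xi\wedge\tilde f^*\beta)$, and Sobolev's inequality shows $\xi\wedge\tilde f^*\beta\in L^{n/(n-1)}_{\mathrm{loc}}(B)$. For the other term, $|\tilde f^*\eta|\lesssim\bigl(1+\dist_{\widetilde N}(\tilde f(x),y_0)\bigr)|Df(x)|^{n-1}$; when $\eta$ is bounded this is $\lesssim|Df|^{n-1}\in L^1_{\mathrm{loc}}$, while in the noncompact case one bounds $\dist_{\widetilde N}(\tilde f(x),\tilde f(x^*))$ by the length of the $\tilde f$‑image of a radial segment and concludes, after Fubini and an Orlicz–Hölder estimate, that $\tilde f^*\eta\in L^1_{\mathrm{loc}}(B)$ — and it is here, and only here, that the divergence condition $\int_1^\infty P(t)\,t^{-n-1}\,dt=\infty$ is used in an essential way. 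Hence the distributional Jacobian of $\tilde f$, the distribution $\operatorname{Jac}\tilde f:=d\bigl(\tilde f^*\eta+\xi\wedge\tilde f^*\beta\bigr)$, is well defined; and the sharp Orlicz–Sobolev theory of distributional Jacobians at the divergence threshold (see \cite{IwaniecM} and the references therein), together with $J_{\tilde f}=J_f\ge 0$, shows $\operatorname{Jac}\tilde f$ is a nonnegative Radon measure whose absolutely continuous part equals $J_f\,dx$, so for every compact $K\subset B$,
\[
\int_K J_f\,dx\ \le\ (\operatorname{Jac}\tilde f)(K)\ <\ \infty .
\]
As $B$ was an arbitrary coordinate ball, $J_f\in L^1_{\mathrm{loc}}(M)$. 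The hard part will be the borderline estimate $\tilde f^*\eta\in L^1_{\mathrm{loc}}$ when $\eta$ grows linearly, where the precise form of the divergence condition is indispensable, closely followed by verifying that $f|_B$ genuinely admits the lift $\tilde f$ in the appropriate Orlicz–Sobolev class.
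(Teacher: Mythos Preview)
Your strategy has real gaps that I do not see how to close. First, the lift. Lifting $f|_B$ through the covering $\pi\colon\widetilde N\to N$ is a continuous--map fact; for discontinuous Sobolev maps one needs at least a VMO condition, and $W^{1,P}$ with $P$ strictly below $t^n$ does not place $f$ in VMO. You cannot borrow continuity from Theorem~\ref{T4}, because that theorem is deduced \emph{from} Theorem~\ref{T5}. When $\pi_1(N)$ is infinite the target $\widetilde N$ is noncompact and there is no reason the putative lift lands in any compact set, so even the statement ``$\tilde f\in W^{1,P}(B,\widetilde N)$'' is problematic. Second, the key estimate $\tilde f^{*}\eta\in L^1_{\mathrm{loc}}$ when $|\eta(y)|\lesssim 1+\dist_{\widetilde N}(y,y_0)$. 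Your bound gives $|\tilde f^{*}\eta(x)|\lesssim \dist_{\widetilde N}(\tilde f(x),y_0)\,|Df(x)|^{n-1}$, and you then control the distance by a line integral of $|Df|$. After Fubini this produces an integrand with $n$ factors of $|Df|$, i.e.\ essentially $\int|Df|^n$, which is exactly what is \emph{not} available below $W^{1,n}$. The divergence condition \eqref{div condition} is a \emph{lower} bound on $P(t)/t^n$ in the integrated sense; it cannot supply an $L^n$ upper bound, so no Orlicz--H\"older trick will rescue this step. Finally, the appeal to ``sharp Orlicz--Sobolev theory'' for the conclusion that $\operatorname{Jac}\tilde f$ is a nonnegative Radon measure with absolutely continuous part $J_f\,dx$ is unjustified: the pointwise sign $J_f\ge 0$ does not by itself make the \emph{distributional} Jacobian a measure, nor control its singular part. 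Your route is essentially the Cartan--form approach of \cite{HIMO}, and the paper explicitly remarks that it is unknown whether that technique adapts to the present setting.

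The paper's proof is entirely different and avoids all three difficulties: it never lifts to $\widetilde N$ and never leaves the compact target. One uses Lemma~\ref{lem:Eucl}(d) to pick a small ball $\bbbb^n\subset M$ with $f|_{\partial\bbbb^n}\in W^{1,\alpha}(\partial\bbbb^n)$ of small oscillation, extends the boundary values to $g\in W^{1,n}(\bbbb^n,N)$ via Lemma~\ref{traces} and the nearest--point projection, and glues $f$ and $g$ over the two hemispheres to obtain $F\in W^{1,P}(\bbbs^n,N)$. The Lipschitz approximations $F_i$ of Lemma~\ref{lemma approx} coincide with $F$ off a set of measure $o(t_i^{-n})$; since the universal cover of $N$ is not a rational homology sphere, Lemma~\ref{lem: RHS} forces $\deg F_i=0$, so $\int_{\bbbs^n}J_{F_i}=0$. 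Splitting this identity over the hemispheres and passing to the limit (monotone convergence on $\bbbs^n_+$, where $J_f\ge 0$; dominated convergence on $\bbbs^n_-$, where $J_g\in L^1$) yields $\int_{\bbbb^n}J_f=-\int_{\bbbs^n_-}J_g<\infty$. The topology is used once, through the degree, and the divergence condition once, through Lemma~\ref{lem:Eucl}(c).
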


\begin{remark}
Note that in Theorems~\ref{T4} and~\ref{T5} the growth condition \eqref{growth 1a} is not needed.
\end{remark}
\begin{remark}
Theorem~\ref{T5} is related to \cite[Theorem~6.6]{HIMO}, but the proof is very different and we do not know if the technique used in \cite{HIMO} can be adapted
to prove Theorem~\ref{T5}.
\end{remark}

The article is organized in the following way.
In Section~\ref{RHS} we discuss rational homology spheres.
In Section~\ref{S1} we recall basic results from the classical theory of Sobolev spaces. This section is followed by
Section~\ref{S2} devoted to definitions and facts from the theory of Orlicz-Sobolev spaces needed in the paper. As a byproduct of methods developed in that section, we provide a new proof of density of smooth mappings in the class of Orlicz-Sobolev mappings between manifolds, Corollary~\ref{cor:density}.
In Section~\ref{sec disc} we construct discontinuous maps in $W^{1,P}(M,N)$ that have finite distortion. This proves the second part of Theorem~\ref{T3}.
In Sections~\ref{Proof} and~\ref{Michal} we prove Theorems~\ref{T5} and~\ref{T4}, respectively. These results, along with results of Section~\ref{sec disc}, complete the proof of
Theorem~\ref{T3}.

In the final Section~\ref{Proof} we prove the continuity part of Theorem~\ref{T3} which, along with the results of Section~\ref{sec disc}, completes the proof of Theorem~\ref{T3}.

\subsection{Notation}
The notation used in the article is pretty standard. By $C$ we will denote a generic constant whose value may change even within a single string of estimates.
By writing, for example, $C=C(n,\alpha)$ we will mean that the constant $C$ depends on $n$ and $\alpha$ only.
The Lebesgue measure of a set $A$ (both in $\bbbr^n$ and on a manifold) will be denoted by $|A|$.
The volume of the unit Euclidean ball in $\bbbr^n$ will be denoted by $\omega_n$, so the volume of the unit sphere $\Sph^{n-1}$ is $n\omega_n$. The barred integral will denote the integral average
$$
\mvint_E f\, d\mu=\frac{1}{\mu(E)}\int_E f\, d\mu.
$$
The measure on a hypersurface in $\bbbr^n$ will be denoted by $d\sigma$.
The characteristic function of a set $E$ will be denoted by $\chi_E$. The closure of $E$ is denoted by $\overbar{E}$. By $\|\cdot\|_p$ we denote the $L^p$-norm with respect to the Lebesgue measure. Whenever we write about smooth functions or mappings, we mean $C^\infty$-smooth.

\subsection*{Acknowledgements}
The authors would like to thank Armin Schikorra for a helpful discussion about fractional Sobolev spaces.

\section{Rational homology spheres}
\label{RHS}

Let us recall that a compact $n$-dimensional Riemannian manifold $N$ without boundary
is a {\em rational homology sphere} if its deRham cohomology groups are the same as these of an $n$-dimensional sphere, i.e. $H^i_{dR}(N)=\bbbr$ for $i=0$ and $i=n$ and $H^i_{dR}(N)=0$ for all other values of $i$. The importance of this condition comes from the following lemma (\cite[Theorem 2.1]{GH_RHS}, see also \cite{Ruberman}).
\begin{lemma} \label{lem: RHS}
Let N be a smooth, compact, connected, oriented $n$-dimensional manifold without boundary. Then there is a smooth mapping
$f\colon \bbbs^n\to N$ of non-zero degree if and only if the universal cover of $N$ is a
rational homology sphere.
\end{lemma}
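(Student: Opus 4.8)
The plan is to treat the two implications separately; each needs only a small dose of classical algebraic topology --- the rational Hurewicz theorem for the ``if'' direction, and Poincar\'e duality together with covering-space theory for the ``only if'' direction. Write $p\colon\tilde N\to N$ for the universal covering; it is connected and simply connected, and recall that a rational homology sphere is, by definition, compact and connected, so that any appeal to it already carries the information that $\pi_1(N)$ is finite.

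\textbf{Sufficiency.} Assume $\tilde N$ is a rational homology sphere. Since $\tilde N$ is then compact, $\pi_1(N)$ is finite and $p$ is a finite-sheeted covering, so $|\deg p|$ equals the number of sheets and is nonzero. Because $\tilde N$ is simply connected with $\tilde H_i(\tilde N;\mathbb{Q})=0$ for $0<i<n$, I would invoke the mod-$\C$ Hurewicz theorem, with $\C$ the Serre class of torsion abelian groups: it gives that the Hurewicz map $\pi_n(\tilde N)\to H_n(\tilde N;\mathbb{Z})\cong\mathbb{Z}$ is an isomorphism modulo torsion, so some class in $\pi_n(\tilde N)$ has nonzero Hurewicz image. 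Represent it by $\phi\colon\bbbs^n\to\tilde N$, which after a homotopy may be taken smooth; then $\deg\phi\neq 0$ and $f:=p\circ\phi$ is a smooth map $\bbbs^n\to N$ with $\deg f=\deg p\cdot\deg\phi\neq 0$.

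\textbf{Necessity.} Assume $f\colon\bbbs^n\to N$ is smooth with $\deg f=d\neq 0$; since $n\geq 2$, $\bbbs^n$ is simply connected, so by the lifting criterion $f=p\circ\tf$ for some $\tf\colon\bbbs^n\to\tilde N$. The first step is to show $\pi_1(N)$ is finite: otherwise $\tilde N$ is a connected \emph{noncompact} $n$-manifold, whence $H_n(\tilde N;\mathbb{Z})=0$, and then $f_*=p_*\circ\tf_*$ vanishes on $H_n(\bbbs^n;\mathbb{Z})$, contradicting $\deg f\neq 0$. Thus $\tilde N$ is compact, connected and oriented, $|\deg p|=|\pi_1(N)|$, and $\deg\tf=\pm d/|\pi_1(N)|\neq 0$. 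It remains to prove that a closed, connected, oriented $n$-manifold $Y$ admitting a nonzero-degree map $g\colon\bbbs^n\to Y$ is a rational homology sphere. Here I would use the standard Poincar\'e-duality fact that a degree-$\delta$ map between closed oriented $n$-manifolds is injective on rational cohomology: for $0\neq\alpha\in H^k(Y;\mathbb{Q})$ choose, by nondegeneracy of the cup-product pairing, a class $\beta\in H^{n-k}(Y;\mathbb{Q})$ with $\langle\alpha\cup\beta,[Y]\rangle\neq 0$; then $\langle g^*\alpha\cup g^*\beta,[\bbbs^n]\rangle=\delta\langle\alpha\cup\beta,[Y]\rangle\neq 0$, so $g^*\alpha\neq 0$. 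Dualizing, $g_*\colon H_i(\bbbs^n;\mathbb{Q})\to H_i(Y;\mathbb{Q})$ is onto for every $i$; since $H_i(\bbbs^n;\mathbb{Q})=0$ for $0<i<n$, this forces $H_i(Y;\mathbb{Q})=0$ for $0<i<n$, while $H_0$ and $H_n$ are $\mathbb{Q}$ by connectedness and orientability. Applying this with $Y=\tilde N$, $g=\tf$ finishes the proof.

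\textbf{Main obstacle.} Essentially everything above is bookkeeping around three black boxes --- the rational Hurewicz theorem, Poincar\'e duality, and multiplicativity of the degree under composition and finite coverings --- together with smooth approximation and homotopy invariance of the degree to move between continuous and smooth maps. The one genuinely structural point, and the step I expect to require the most care, is the dichotomy in the necessity direction: lifting $f$ to the universal cover and then using that an \emph{open} $n$-manifold has vanishing top homology to conclude that $\pi_1(N)$ must be finite; without this one cannot even speak of $\deg\tf$, and the whole reduction to the simply connected case collapses.
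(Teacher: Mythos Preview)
Your argument is correct. Note, however, that the paper does not actually prove this lemma in the text: it is simply quoted, with a citation to \cite[Theorem~2.1]{GH_RHS} (and \cite{Ruberman}) for the proof. So there is no in-paper argument to compare against; your write-up supplies exactly the kind of proof one finds in those references. The two directions are handled by the standard tools you identify: the rational (Serre mod-$\mathcal{C}$) Hurewicz theorem for sufficiency, and the lift-to-the-universal-cover plus the Poincar\'e-duality/umkehr fact that nonzero-degree maps are injective on rational cohomology for necessity. The step you flag as the main obstacle --- ruling out infinite $\pi_1(N)$ via vanishing of top homology of a noncompact manifold --- is indeed the crux, and you handle it correctly.
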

Thus if the universal cover of $N$ \emph{is not} a rational homology sphere, then every smooth mapping from an $n$-dimensional sphere to $N$ is of degree zero. The same holds for Lipschitz mappings from $\Sph^n$ to $N$, since every Lipschitz mapping is homotopic to a smooth one and the degree is a homotopy invariant.

Rational homology spheres include  spheres themselves,  integral homology spheres like the celebrated Poincar\'e sphere and more general Brieskorn
manifolds, and many others. The book \cite{Saveliev} provides numerous 3-dimensional examples, we refer the reader also to \cite[Section 2]{GH_RHS}. On the other hand, the following well known proposition holds:
\begin{proposition}
\label{P1}
If $N$ is an $n$-dimensional  rational homology sphere and
\begin{itemize}
\item[a)] $n=2$, then $N$ is diffeomorphic to $\Sph^2$,
\item[b)] $n=3$ and $N$ is simply connected, then $N$ is diffeomorphic to $\Sph^3$,
\item[c)] $n=4$, then $N$ is homeomorphic to $\Sph^4$.
\end{itemize}
\end{proposition}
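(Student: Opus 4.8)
The plan is to treat the three dimensions separately, in each case stripping off a little homological bookkeeping and then quoting a classification theorem; essentially all the depth is in case~(c). First I would record a remark used in all three parts: the hypothesis $H^n_{dR}(N)=\R$ already forces $N$ to be orientable, since the top deRham cohomology of a closed connected smooth $n$-manifold is $\R$ when $N$ is orientable and $0$ otherwise. Thus in every case $N$ is a closed connected oriented $n$-manifold with $H_k(N;\mathbb{Q})=0$ for $0<k<n$, so by the universal coefficient theorem $H_k(N;\mathbb{Z})$ is a finite group for $0<k<n$. For~(a) this already suffices: $N$ is a closed oriented surface with Euler characteristic $\chi(N)=b_0-b_1+b_2=2$, hence of genus zero, so $N$ is diffeomorphic to $\Sph^2$ by the classification of compact surfaces (in dimension two the topological and smooth classifications coincide, so there is nothing further to verify).

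For~(b) I would use simple connectivity to get $H_1(N;\mathbb{Z})=0$, and then Poincar\'e duality together with the universal coefficient theorem gives $H_2(N;\mathbb{Z})\cong H^1(N;\mathbb{Z})\cong\operatorname{Hom}(H_1(N;\mathbb{Z}),\mathbb{Z})=0$. Hence $N$ is a simply connected integral homology $3$-sphere, that is, just a simply connected closed $3$-manifold, and by Perelman's proof of the Poincar\'e conjecture $N$ is homeomorphic to $\Sph^3$. Since in dimension three the smooth, PL and topological categories agree, $N$ is in fact diffeomorphic to $\Sph^3$.

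For~(c) the homology computation shows that $H_2(N;\mathbb{Z})$ is finite, so $H_2(N;\mathbb{R})=0$ and the intersection form of $N$ on the free part of $H_2(N;\mathbb{Z})$ is the zero form. The decisive input is then Freedman's classification of closed simply connected topological $4$-manifolds: such a manifold is determined up to homeomorphism by its intersection form, and the only one carrying the trivial form is $\Sph^4$. This identifies $N$ with $\Sph^4$ up to homeomorphism; as recalled after the statement one cannot in general replace ``homeomorphic'' by ``diffeomorphic'' here, because of the possible existence of exotic structures in dimension four.

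The hard part will be case~(c). In~(a) and~(b) the reduction to a classical theorem is elementary, whereas here one must invoke Freedman's theorem, a deep result; moreover this is the one place where the argument genuinely uses more than the rational cohomology of $N$ (Freedman's theorem is a statement about simply connected $4$-manifolds, and indeed it is for universal covers --- which are simply connected by construction --- that Proposition~\ref{P1} is applied in this paper). Cases~(a) and~(b) are by comparison short deductions from the classification of surfaces and from the three-dimensional Poincar\'e conjecture.
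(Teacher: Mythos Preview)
Your argument is correct and follows the same overall plan as the paper: (a) via the classification of surfaces, (b) via Poincar\'e duality plus Perelman, and (c) via Freedman. Two small points of comparison are worth noting.

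For (c), the paper (which outsources the details to \cite{GH_RHS} but says the $n=3$ case is handled ``similarly as $n=4$'') proceeds by first showing that $N$ is an \emph{integral} homology sphere, then invoking the homological Whitehead theorem to conclude that $N$ is a homotopy sphere, and finally applying Freedman's resolution of the topological $4$-dimensional Poincar\'e conjecture. Your route via the intersection form and Freedman's classification theorem is an equally valid shortcut to the same deep input; just be aware that the full classification also involves the Kirby--Siebenmann invariant, which vanishes here because $N$ is smooth, so your conclusion is unaffected.

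You are also right to flag that part (c) tacitly requires simple connectivity: as literally stated it is false (there exist non-simply-connected rational homology $4$-spheres), but the only application in the paper is Corollary~\ref{P2}, where one works with the universal cover, and the paper's own sketch of the parallel $n=3$ case explicitly uses the hypothesis ``$N$ is simply connected''. Your parenthetical remark handles this correctly.
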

\begin{proof}
Case a) follows from classification of closed surfaces (see e.g. \cite[Section 9.3]{Hirsch}): an orientable surface without boundary is uniquely (up to a diffeomorphism) determined by its genus, which, for a rational homology sphere, must be 0.

For a proof of c) see e.g. \cite[Proposition 2.6]{GH_RHS}.

The case $n=3$ is settled similarly as $n=4$, and we present a sketch of  arguments here.  The missing details and references can be found  in  \cite[Proposition 2.6]{GH_RHS}.

If $N$ is connected, orientable and compact, then $H_0(N,\bbbz)=\bbbz$ and $H_3(N,\bbbz)=\bbbz$. Since $N$ is also simply connected, $H_1(N,\bbbz)=0$, and the Universal Coefficients Theorem gives $H^1(N,\bbbz)=0$. Also,  Poincar\'e duality shows that $H_2(N,\bbbz)=H^1(N,\bbbz)=0$. Thus $N$ is an integral homology sphere; homology Whitehead's theorem yields that $N$ is a homotopy sphere. Finally, Perelman's theorem on Poincar\'e's Conjecture proves that $N$ is indeed diffeomorphic to a 3-dimensional sphere.
\end{proof}
\begin{corollary}
\label{P2}
Assume $N$ is a smooth, compact, connected, oriented $n$-dimensional manifold without boundary such that its universal cover  $\tilde{N}$ is a rational homology sphere. Then
\begin{itemize}
\item if $n=2$ then $N$ is diffeomorphic to $\Sph^2$,
\item if $n=3$, then $\tilde{N}$ is diffeomorphic to $\Sph^3$,
\item if $n=4$, then $N$ is homeomorphic to $\Sph^4$.
\end{itemize}
\end{corollary}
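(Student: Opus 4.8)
The plan is to deduce Corollary~\ref{P2} from Proposition~\ref{P1} applied directly to the universal cover $\tilde N$, and then, in the two cases ($n=2$ and $n=4$) in which the conclusion concerns $N$ rather than $\tilde N$, to show that the covering $\tilde N\to N$ is trivial. First I would record the structure of $\tilde N$: by hypothesis it is a rational homology sphere, hence (by the definition used in this paper) a \emph{compact} manifold, and as the universal cover of the smooth oriented $n$-manifold $N$ it is also connected, simply connected, smooth, oriented, $n$-dimensional and without boundary. Since $\tilde N$ is compact, the fibers of $\tilde N\to N$ are finite, so $\pi_1(N)$ is a finite group acting freely on $\tilde N$ with quotient $N$. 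The case $n=3$ is then immediate: $\tilde N$ is a simply connected rational homology $3$-sphere, so case~b) of Proposition~\ref{P1}, applied to $\tilde N$, shows that $\tilde N$ is diffeomorphic to $\Sph^3$, which is exactly the assertion for $n=3$.

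For $n=2$ and $n=4$ the main step --- and the only one requiring an idea beyond quoting Proposition~\ref{P1} --- is to prove that $\pi_1(N)$ is trivial. Since $\tilde N$ is a rational homology $n$-sphere with $n$ even, $\chi(\tilde N)=1+(-1)^n=2$, while $\chi(\tilde N)=|\pi_1(N)|\cdot\chi(N)$ for a $|\pi_1(N)|$-sheeted covering; as $\chi(N)\in\bbbz$, this forces $|\pi_1(N)|\in\{1,2\}$. To exclude $|\pi_1(N)|=2$, let $\tau$ be the nontrivial deck transformation: it is a fixed-point-free diffeomorphism of the compact manifold $\tilde N$ with $\tau^2=\mathrm{id}$, so the Lefschetz fixed point theorem gives $L(\tau)=0$. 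On the other hand $\tilde N$ has the de Rham cohomology of a sphere, so $\tau^*$ acts as the identity on $H^0_{dR}(\tilde N)$ and as multiplication by some $d\in\{-1,1\}$ on $H^n_{dR}(\tilde N)$; hence $0=L(\tau)=1+d$, i.e. $d=-1$, so $\tau$ reverses orientation and the quotient $N=\tilde N/\langle\tau\rangle$ is non-orientable, contradicting the hypothesis that $N$ is oriented. Therefore $\pi_1(N)=1$ and $N=\tilde N$, and then case~a) of Proposition~\ref{P1} shows $N$ is diffeomorphic to $\Sph^2$ (when $n=2$) while case~c) shows $N$ is homeomorphic to $\Sph^4$ (when $n=4$).

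The only nontrivial ingredient is Proposition~\ref{P1}, which is already available; the rest --- the structure of the universal cover, the Euler characteristic count, and the Lefschetz computation --- is elementary, so I do not anticipate a genuine obstacle. The single point I would state with care is the exclusion of $|\pi_1(N)|=2$: what is really being used is that a rational homology even-sphere carries no free \emph{orientation-preserving} involution, so the only candidate nontrivial deck transformation reverses orientation and is therefore incompatible with $N$ being oriented.
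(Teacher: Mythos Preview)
Your argument is correct and is exactly the natural way to deduce the corollary from Proposition~\ref{P1}; the paper itself gives no proof of Corollary~\ref{P2}, leaving it as an immediate consequence. The only step requiring work beyond quoting Proposition~\ref{P1} is, as you correctly isolate, showing $\pi_1(N)=1$ in the even-dimensional cases so that $N=\tilde N$, and your Euler-characteristic-plus-Lefschetz argument (an even-dimensional rational homology sphere admits no free orientation-preserving involution) is the standard and essentially unique way to do this.
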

Whether $N$ is diffeomorphic to $\Sph^4$ when $n=4$, remains a long standing open problem.\\
However, in dimension 5 and higher, there are simply connected rational homology spheres that are not spheres, e.g. the Wu manifold $SU(3)/SO(3)$, \cite[Theorem 6.7]{MimuraToda} and \cite[Remark, p. 374]{Barden}. See \cite{Ruberman} and \cite[Lemma 1.1]{Barden} for more examples.

\section{Sobolev spaces}
\label{S1}

In this section we collect technical results from the theory of Sobolev spaces that are needed in the paper. All the results discussed here are well known except perhaps for Lemma~\ref{traces}, which is also known, but very difficult to find in the literature.

The Sobolev space $W^{1,p}(\Omega)$, $\Omega\subset\bbbr^n$,  consists of weakly differentiable functions such that
$\Vert f\Vert_{1,p}=\Vert f\Vert_p+\Vert Df\Vert_p<\infty$. Similarly we define the Sobolev space $W^{1,p}(M)$,
where $M$ is a Riemannian manifold.

Assume $M$ and $N$ are smooth, Riemannian manifolds without boundary, with $N$ compact. Assume also that $N$ is isometrically embedded in $\bbbr^{k}$ for some $k\in\N$. Then the class of Sobolev mappings
$W^{1,p}(M,N)$  is defined as
\begin{equation}
\label{SobolevMaps}
W^{1,p}(M,N)=\left\{f\in W^{1,p}_{\rm loc}(M,\bbbr^{k})~~\colon~~|Df|\in L^p(M) \text{ and }  f(x)\in N \text{ for a.e. }x\in M\right\}.
\end{equation}

If $M$ is compact, then $f\in L^p(M,\bbbr^k)$. However, we cannot require integrability of the mapping $f$ in the non-compact case (especially when the measure of $M$ is infinite):
if the measure of $M$ is infinite and if $N$ is embedded into $\bbbr^k$ in a way that the origin is at a positive distance to $N$, then no mapping
$f:M\to N\subset\bbbr^k$ is integrable with exponent $1\leq p<\infty$. Since our results are of local nature in $M$ (continuity and finite distortion are defined
locally), integrability of $f$ is not important to us.

\subsection{Pointwise inequalities}
For $f\in L^1_{\rm loc}(\bbbr^n)$ the Hardy-Littlewood maximal function is defined by
$$
\MM f(x)=\sup_{r>0}\mvint_{\bbbb^n(x,r)} |f(y)|\, dy.
$$
It is well known, \cite{stein}, that the maximal function is bounded in $L^p$ when $1<p<\infty$. This and Chebyshev's inequality imply that
if $f\in L^p(\bbbr^n)$, $1<p<\infty$, then
\begin{equation}
\label{weakLp}
t^p|\{x\in\bbbr^n:\, \MM f(x)>t\}|\to 0
\quad
\text{as $t\to\infty$.}
\end{equation}
If $f\in W^{1,1}_{\rm loc}$, then the following pointwise inequality is true, see for example \cite{acerbif,hajlasz1,lewis}.
\begin{equation}
\label{pointwise}
|f(x)-f(y)|\leq C(n)|x-y|(\MM |Df|(x)+\MM |Df|(y))
\quad
\text{for almost all $x,y\in\bbbr^n$.}
\end{equation}
A simple argument (see \cite[p.~97]{hajlasz1}) shows that if we choose the representative of $f$ defined by
$$
f(x):=\limsup_{r\to 0}\mvint_{\bbbb^n(x,r)} f(y)\, dy
\quad\text{for every $x\in\bbbr^n$,}
$$
then the inequality \eqref{pointwise} is true for {\em all} $x,y\in\bbbr^n$.

It easily follows
that if $f\in W^{1,1}(\bbbb^n)$, where $\bbbb^n\subset\bbbr^n$ is a ball of any radius, then the inequality \eqref{pointwise}
is still true for all $x,y\in\bbbb^n$, where we put $|Df|=0$ outside $\bbbb^n$, see \cite[Lemma~4~and~(7)]{hajlaszm}.

If $M$ is a compact Riemannian manifold with or without boundary, we can define the maximal function on $M$ with
averages over geodesic balls. Then the maximal function is bounded in $L^p(M)$, $1<p<\infty$, \eqref{weakLp} is true (with the same proof) and  $f\in W^{1,1}(M)$
satisfies the pointwise inequality
\begin{equation}
\label{pointwise2}
|f(x)-f(y)|\leq C(M)|x-y|(\MM |Df|(x)+\MM |Df|(y))
\quad
\text{for {\em all} $x,y\in M$,}
\end{equation}
with a suitable choice of a representative of $f$.
This inequality immediately gives the following well known
\begin{lemma}
\label{lemma lip}
Assume $f\in W^{1,1}(M)$, where $M$ is a compact Riemannian manifold with or without boundary.
Then $f$, restricted to the set $\{\MM |Df|\leq t\}$, is $Ct$-Lipschitz for some constant $C$ depending on $M$ only.
\end{lemma}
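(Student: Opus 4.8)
The plan is to read off the statement directly from the pointwise inequality \eqref{pointwise2}, which is already available on $M$. First I would fix the distinguished representative of $f$ for which \eqref{pointwise2} holds for \emph{all} $x,y\in M$, namely the one obtained from $\limsup$ of averages over geodesic balls (the manifold analogue of the Euclidean representative described above). Set $E_t=\{x\in M:\MM|Df|(x)\le t\}$. For any two points $x,y\in E_t$, inequality \eqref{pointwise2} gives
\[
|f(x)-f(y)|\le C(M)\,|x-y|\,\bigl(\MM|Df|(x)+\MM|Df|(y)\bigr)\le 2\,C(M)\,t\,|x-y|,
\]
where $|x-y|$ denotes the Riemannian distance on $M$. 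Thus $f$ restricted to $E_t$ is Lipschitz with constant $2C(M)\,t$, which is exactly the assertion with $C=2C(M)$ depending only on $M$.

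The only point requiring a word of care is that \eqref{pointwise2} is an everywhere statement only for the specific representative chosen via ball averages; so, as is standard for this type of result, Lemma~\ref{lemma lip} is to be understood for that representative (the same convention is already used in the statement of \eqref{pointwise2}). Consequently there is essentially no obstacle here: once \eqref{pointwise2} is granted, the lemma is an immediate corollary, which is why the excerpt labels it well known. The substantive content is packed into \eqref{pointwise2} itself, whose proof on a compact $M$ proceeds by covering $M$ with finitely many bi-Lipschitz coordinate charts, invoking the Euclidean estimate \eqref{pointwise} (or its version on balls) in each chart, and comparing the intrinsic maximal function $\MM|Df|$ with the Euclidean maximal function of the local representation up to multiplicative constants depending on the charts; but that reduction is precisely what \eqref{pointwise2} already records, so it need not be repeated in the proof of Lemma~\ref{lemma lip}.
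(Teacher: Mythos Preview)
Your argument is correct and is precisely the immediate deduction the paper intends: the lemma is stated right after \eqref{pointwise2} with the comment that it follows immediately, and your bound $|f(x)-f(y)|\le 2C(M)t\,|x-y|$ on $\{\MM|Df|\le t\}$ is exactly that deduction. The remark about fixing the $\limsup$-average representative is the correct caveat and matches the paper's convention.
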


\subsection{Morrey's inequality}
The next result is a version of the classical Morrey's lemma in the case of Sobolev functions defined on a sphere.
\begin{lemma}
\label{morrey}
If $\Sph^{n-1}(r)$ is an $(n-1)$-dimensional Euclidean sphere of radius $r$ and
$f\in W^{1,\alpha}(\Sph^{n-1}(r))$ for some $\alpha >n-1$, then
$f$ has a $C^{0,1-\frac{n-1}{\alpha}}$-H\"older continuous representative
which satisfies
$$
\osc_{\Sph^{n-1}(r)} f=\sup_{x,y\in \Sph^{n-1}(r)}|f(x)-f(y)|\leq
C(n,\alpha)r\,\Big(\, \mvint_{\Sph^{n-1}(r)}|Df|^\alpha\, d\sigma\Big)^{1/\alpha}.
$$
\end{lemma}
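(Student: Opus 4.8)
The plan is to remove the dependence on the radius $r$ by a homothety, and then to deduce the estimate on the unit sphere from the classical Euclidean Morrey lemma through a finite atlas. First I would rescale. The map $h_r\colon\Sph^{n-1}(1)\to\Sph^{n-1}(r)$, $h_r(z)=rz$, pulls the round metric of $\Sph^{n-1}(r)$ back to $r^2$ times that of $\Sph^{n-1}:=\Sph^{n-1}(1)$, so that $\dist_{\Sph^{n-1}(r)}(rz,rw)=r\,\dist_{\Sph^{n-1}}(z,w)$ and the surface measure on $\Sph^{n-1}(r)$ corresponds to $r^{n-1}\,d\sigma$ on $\Sph^{n-1}$. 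Writing $g:=f\circ h_r\in W^{1,\alpha}(\Sph^{n-1})$ one has $|Dg(z)|=r\,|Df(rz)|$, so a short computation gives
$$
\mvint_{\Sph^{n-1}(r)}|Df|^\alpha\,d\sigma=r^{-\alpha}\mvint_{\Sph^{n-1}}|Dg|^\alpha\,d\sigma,
\qquad
\osc_{\Sph^{n-1}(r)}f=\osc_{\Sph^{n-1}}g,
$$
whence $r\big(\mvint_{\Sph^{n-1}(r)}|Df|^\alpha\,d\sigma\big)^{1/\alpha}=\big(\mvint_{\Sph^{n-1}}|Dg|^\alpha\,d\sigma\big)^{1/\alpha}$. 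Since a $C^{0,\gamma}$ bound for $g$ yields a $C^{0,\gamma}$ bound for $f$ with the same exponent $\gamma$ (the Hölder constant only picks up a power of $r$, which is irrelevant), it suffices to prove the lemma on $\Sph^{n-1}$, i.e. to show that $g$ has a $C^{0,1-(n-1)/\alpha}$ representative with $\osc_{\Sph^{n-1}}g\le C(n,\alpha)\big(\mvint_{\Sph^{n-1}}|Dg|^\alpha\,d\sigma\big)^{1/\alpha}$.

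Next I would cover the compact manifold $\Sph^{n-1}$ by finitely many open sets $U_1,\dots,U_m$, each carrying a bi-Lipschitz chart onto a Euclidean ball in $\bbbr^{n-1}$, and fix a Lebesgue number $\delta>0$ for this cover. Since $\alpha>n-1$, the classical Morrey inequality on a Euclidean ball (in its averaged form), transported through the chart, applies to $g\in W^{1,\alpha}(U_i)$: it produces a $C^{0,1-(n-1)/\alpha}$ representative of $g$ on each $U_i$, a local Hölder estimate, and the bound
$$
\osc_{U_i}g\le C(n,\alpha)\Big(\int_{U_i}|Dg|^\alpha\,d\sigma\Big)^{1/\alpha}\le C(n,\alpha)\Big(\int_{\Sph^{n-1}}|Dg|^\alpha\,d\sigma\Big)^{1/\alpha},
$$
where the bi-Lipschitz constants of the finitely many charts and the number $|\Sph^{n-1}|$ are absorbed into $C(n,\alpha)$, and $\int_{\Sph^{n-1}}|Dg|^\alpha\,d\sigma$ is a fixed multiple of $\mvint_{\Sph^{n-1}}|Dg|^\alpha\,d\sigma$.

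Finally I would globalize. Set $\gamma:=1-(n-1)/\alpha$ and $A:=\big(\int_{\Sph^{n-1}}|Dg|^\alpha\,d\sigma\big)^{1/\alpha}$. If $\dist(x,y)<\delta$, then $x$ and $y$ lie in a common $U_i$ and the local Hölder estimate gives $|g(x)-g(y)|\le C(n,\alpha)\,\dist(x,y)^{\gamma}A$. If $\dist(x,y)\ge\delta$, then, joining $x$ to $y$ by a chain of at most $m$ overlapping sets $U_i$ (possible since $\Sph^{n-1}$ is connected) and adding the oscillation bounds above, $|g(x)-g(y)|\le\osc_{\Sph^{n-1}}g\le m\,C(n,\alpha)\,A\le C(n,\alpha)\,\delta^{-\gamma}\,\dist(x,y)^{\gamma}A$, using $\dist(x,y)\le\diam\Sph^{n-1}$. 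Together these give the global $C^{0,1-(n-1)/\alpha}$ bound and, taking the supremum over $x,y$, the oscillation estimate on $\Sph^{n-1}$; the first step then transfers everything to $\Sph^{n-1}(r)$. I expect no serious analytic difficulty beyond the Euclidean Morrey lemma itself; the only points requiring care — the closest thing to an obstacle — are the bookkeeping of the $r$-dependence under the homothety in the first step and the passage from the chart-wise estimates to global ones using the Lebesgue number and connectedness. (Alternatively, one could bypass the atlas by proving directly the Riesz-potential estimate $\big|g(x)-\mvint_{\Sph^{n-1}}g\,d\sigma\big|\le C(n)\int_{\Sph^{n-1}}|Dg(y)|\,\dist(x,y)^{-(n-2)}\,d\sigma(y)$ and bounding the potential by Hölder's inequality, the relevant integral $\int_{\Sph^{n-1}}\dist(x,y)^{-(n-2)\alpha'}\,d\sigma(y)$ being finite precisely when $\alpha>n-1$.)
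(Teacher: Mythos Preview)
The paper does not prove this lemma; it is stated as ``a version of the classical Morrey's lemma in the case of Sobolev functions defined on a sphere'' and treated as known. Your argument --- reducing to the unit sphere by the homothety $h_r$, applying the Euclidean Morrey inequality on a finite bi-Lipschitz atlas, and globalizing via a Lebesgue number and a chaining argument --- is a correct and standard way to derive it, and the scaling bookkeeping is right. The alternative Riesz-potential route you sketch at the end is also valid, and is closer in spirit to how such estimates are usually presented in the mappings-of-finite-distortion literature (e.g.\ in the proof of \cite[Theorem~7.5.2]{IwaniecM}).
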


\subsection{Traces}
\label{ssec:traces}
If $\Omega\subset\bbbr^n$ is a smooth bounded domain and $1<p<\infty$, we say that $u$ belongs to the fractional Sobolev space
$W^{1-\frac{1}{p},p}(\partial\Omega)$ if
$\Vert u\Vert_{1-\frac{1}{p},p}=\Vert u\Vert_{L^p(\partial\Omega)}+S_p(u)<\infty$, where
$$
S_p(u)=\left(\, \int_{\partial\Omega}\int_{\partial\Omega}\frac{|u(x)-u(y)|^p}{|x-y|^{n+p-2}}\, d\sigma(x)\, d\sigma(y)\right)^{1/p}\, .
$$
Gagliardo \cite{gagliardo} (see also \cite[Chapter~15]{leoni}) proved that the trace operator
${\rm Tr}\, :W^{1,p}(\Omega)\to W^{1-\frac{1}{p},p}(\partial\Omega)$ is bounded and there is an extension operator
${\rm Ext}:W^{1-\frac{1}{p},p}(\partial\Omega)\to W^{1,p}(\Omega)$ such that  ${\rm Tr}\circ{\rm Ext}={\rm id}$ on $W^{1-\frac{1}{p},p}(\partial\Omega)$.
In other words, the fractional Sobolev space $W^{1-\frac{1}{p},p}(\partial\Omega)$ provides a complete description of traces of $W^{1,p}(\Omega)$ functions.

In the article we will need the following known fact that was proven in \cite{BIN}.
A self contained and elementary (but difficult) proof can be found in \cite{leoni} (see Theorem~14.32, Remark~14.35 and Proposition~14.40). This result also follows from a sequence of results (as indicated below) in \cite{triebel}.
\begin{lemma}
\label{traces}
Let $\Omega\subset\bbbr^n$ be a bounded and smooth domain. If $n>2$, then
$W^{1,n-1}(\partial\Omega)\subset W^{1-\frac{1}{n},n}(\partial\Omega)$.
That is, there is a bounded extension operator ${\rm Ext}:W^{1,n-1}(\partial\Omega)\to W^{1,n}(\Omega)$.\\
If $n=2$, then for any $\alpha>1$ we have
$W^{1,\alpha}(\partial\Omega)\subset W^{\frac{1}{2},2}(\partial\Omega)$.
That is, there is a bounded extension operator
${\rm Ext}:W^{1,\alpha}(\partial\Omega)\to W^{1,2}(\Omega)$.
\end{lemma}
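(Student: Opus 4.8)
Here is the plan. The assertion about the extension operator is immediate once the inclusion of function spaces is known: one composes $W^{1,n-1}(\partial\Omega)\hookrightarrow W^{1-\frac1n,n}(\partial\Omega)$ (for $n=2$: $W^{1,\alpha}(\partial\Omega)\hookrightarrow W^{\frac12,2}(\partial\Omega)$) with Gagliardo's bounded extension operator ${\rm Ext}\colon W^{1-\frac1p,p}(\partial\Omega)\to W^{1,p}(\Omega)$ recalled above, taking $p=n$ (resp. $p=2$). So the whole content is the boundary inclusion, i.e. the Gagliardo-seminorm bound
\[
\iint_{\partial\Omega\times\partial\Omega}\frac{|u(x)-u(y)|^{n}}{|x-y|^{2n-2}}\,d\sigma(x)\,d\sigma(y)\le C\,\|u\|_{W^{1,n-1}(\partial\Omega)}^{n}\qquad(n\ge3),
\]
with the corresponding modification for $n=2$ ($\alpha>1$ in place of $n-1$, exponent $2$ in place of $n$, kernel $|x-y|^{-2}$).

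First I would localize. Fix a finite atlas $\psi_i\colon U_i\to V_i\subset\R^{n-1}$ of the compact smooth hypersurface $\partial\Omega$ with a subordinate smooth partition of unity $\{\eta_i\}$ chosen so that $\operatorname{supp}\eta_i\Subset U_i$, and pick $0<\delta_0<\min_i\operatorname{dist}(\operatorname{supp}\eta_i,\partial\Omega\setminus U_i)$. On a compact smooth hypersurface the chordal distance is bi-Lipschitz equivalent to the geodesic distance, and each $\psi_i$ distorts distances and the surface measure only by bounded factors. Split the double integral at $|x-y|=\delta_0$. On $\{|x-y|\ge\delta_0\}$ use $|u(x)-u(y)|^{n}\le2^{n}(|u(x)|^{n}+|u(y)|^{n})$ and $|x-y|^{2n-2}\ge\delta_0^{2n-2}$; this part is $\le C(\delta_0,\partial\Omega)\|u\|_{L^{n}(\partial\Omega)}^{n}$, and $\|u\|_{L^{n}(\partial\Omega)}\le C\|u\|_{W^{1,n-1}(\partial\Omega)}$ by the critical Sobolev embedding on the $(n-1)$-manifold $\partial\Omega$ (for $n=2$ just $W^{1,1}\subset L^{\infty}$ on a finite union of circles). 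On $\{|x-y|<\delta_0\}$, writing $u=\sum_i\eta_i u$, the choice of $\delta_0$ forces every non-vanishing difference $(\eta_i u)(x)-(\eta_i u)(y)$ to come from a pair with both $x,y\in U_i$; by the bounded overlap of the supports and by pushing the integral forward through the $\psi_i$, this part is $\le C\sum_i[v_i]_{W^{1-\frac1n,n}(\R^{n-1})}^{n}$ with $v_i:=(\eta_i u)\circ\psi_i^{-1}\in W^{1,n-1}(\R^{n-1})$ compactly supported. So everything reduces to the Euclidean inclusions $W^{1,n-1}(\R^{n-1})\subset W^{1-\frac1n,n}(\R^{n-1})$ ($n\ge3$) and $W^{1,\alpha}(\R)\subset W^{\frac12,2}(\R)$ ($n=2$, $\alpha>1$) for compactly supported functions, together with $[v_i]_{W^{1-\frac1n,n}}\le C\|v_i\|_{W^{1,n-1}}\le C\|u\|_{W^{1,n-1}(\partial\Omega)}$.

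For $n=2$ the Euclidean inclusion is \emph{subcritical}: the Sobolev numbers $s-\frac dp$ of $W^{1,\alpha}(\R)$ and $W^{\frac12,2}(\R)$ are $1-\frac1\alpha>0$ and $0$. Writing $u(x)-u(y)=\int_{[x,y]}u'$ and using H\"older's inequality with exponent $\alpha$,
\[
\frac{|u(x)-u(y)|^{2}}{|x-y|^{2}}\le\Big(\frac{1}{|x-y|}\int_{[x,y]}|u'|^{\alpha}\Big)^{2/\alpha}\le C\big(\MM(|u'|^{\alpha})(x)\,\MM(|u'|^{\alpha})(y)\big)^{1/\alpha};
\]
integrating over a bounded interval and applying Kolmogorov's inequality (the weak $(1,1)$ bound for $\MM$ gives $\MM g\in L^{p}$ on sets of finite measure for $g\in L^{1}$ and every $p<1$, here $p=1/\alpha<1$) yields the claim. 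For $n\ge3$ the inclusion is \emph{borderline}: both $W^{1,n-1}(\R^{n-1})$ and $W^{1-\frac1n,n}(\R^{n-1})$ have Sobolev number $0$, so this is the fractional analogue of the critical embedding $W^{1,d}(\R^{d})\subset L^{q}(\R^{d})$, valid for every $q<\infty$ but not $q=\infty$. Here I would invoke Littlewood--Paley theory: identify $W^{1,n-1}(\R^{n-1})=F^{1}_{n-1,2}(\R^{n-1})$ and, since $1-\frac1n\notin\bbbz$, $W^{1-\frac1n,n}(\R^{n-1})=B^{1-\frac1n}_{n,n}(\R^{n-1})$, then chain the Franke--Jawerth embedding $F^{1}_{n-1,2}\hookrightarrow B^{1-\frac1n}_{n,\,n-1}$ (legitimate because $1-\tfrac{n-1}{n-1}=(1-\tfrac1n)-\tfrac{n-1}{n}$ and $n-1<n$) with the elementary fine-index embedding $B^{1-\frac1n}_{n,\,n-1}\hookrightarrow B^{1-\frac1n}_{n,\,n}$; this is the chain of results from \cite{triebel} alluded to above, while a self-contained difference-quotient and truncation proof is the one carried out in \cite{leoni} and \cite{BIN}.

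The hard part will be precisely this borderline embedding for $n\ge3$. Because the two spaces share the same Sobolev number, the soft route fails: inserting the pointwise inequality \eqref{pointwise} into the Gagliardo integral would only give $\iint|x-y|^{-(2n-2)}|u(x)-u(y)|^{n}\le C\int_{\R^{n-1}}(\MM|Du|)^{n}$, and $\MM|Du|$ need not belong to $L^{n}$ even though $Du\in L^{n-1}$ — one loses a logarithmic factor, exactly as in the failure of $W^{1,d}\subset L^{\infty}$. So this step genuinely requires the Littlewood--Paley machinery (or an equivalent Maz'ya-type dyadic truncation argument); the localization and the entire $n=2$ case are routine. Since complete proofs are available in \cite{BIN}, \cite{leoni}, and \cite{triebel}, in the paper we will simply cite the result.
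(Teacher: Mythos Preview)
Your proposal is correct. For $n\ge 3$ it is essentially the paper's argument: both invoke the Besov--Triebel--Lizorkin scale, the paper via the chain $W^{1,n-1}=F^1_{n-1,2}\subset F^{1-1/n}_{n,n}=B^{1-1/n}_{n,n}=W^{1-1/n,n}$ (the Sobolev-type embedding for $F$-spaces in \cite{triebel}), you via Franke--Jawerth $F^1_{n-1,2}\hookrightarrow B^{1-1/n}_{n,\,n-1}\hookrightarrow B^{1-1/n}_{n,\,n}$; these are minor variants of the same mechanism. Your localization to charts is spelled out where the paper leaves it implicit. The case $n=2$ is where you genuinely diverge: the paper again runs through Triebel--Lizorkin spaces, via $W^{1,\alpha}=F^1_{\alpha,2}\subset F^{1/\alpha}_{\alpha,2}\subset F^{1/2}_{2,2}=W^{1/2,2}$ (together with a remark, citing \cite{mironescus}, on why the $n>2$ chain breaks down here), whereas you give an elementary one-dimensional argument from H\"older's inequality, the weak-$(1,1)$ maximal bound and Kolmogorov's lemma. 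Your observation that the $n=2$ embedding is subcritical while the $n\ge 3$ one is borderline is precisely why the elementary route is available only for $n=2$; the paper's approach buys uniformity across dimensions, yours buys a self-contained proof in the easy case that avoids the function-space apparatus altogether.
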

\begin{proof}
If $n>2$, then using the following results from \cite{triebel}:
Theorem~2.5.6, Theorem~2.7.1, Proposition~2.3.2.2(8), Theorem~2.5.7
and 2.5.7(9) (in that order) we obtain
the following relations for function spaces on $\bbbr^{n-1}$:
$$
W^{1,n-1}(\bbbr^{n-1})=
H^1_{n-1}=
F^1_{n-1,2}\subset
F^{1-\frac{1}{n}}_{n,n}=
B^{1-\frac{1}{n}}_{n,n}=
\Lambda^{1-\frac{1}{n}}_{n,n}=
W^{1-\frac{1}{n},n}(\bbbr^{n-1}).
$$
The above identification of Sobolev spaces and Triebel-Lizorkin spaces fails for $n=2$,
and in that case we need to argue in a slightly different way.
Note that we can assume that $1<\alpha<2$. Then using the results from
\cite{triebel}: Theorem~2.5.6, Proposition~2.3.2.2(8), Theorem~2.7.1, Proposition~2.3.2.2(9), Theorem~2.5.7(5) and 2.5.7(9)
(in that order) we obtain
$$
W^{1,\alpha}(\bbbr)=H^1_\alpha=F^1_{\alpha,2}\subset F^{\frac{1}{\alpha}}_{\alpha,2}
\subset F^{\frac{1}{2}}_{2,2}=B^{\frac{1}{2}}_{2,2}=\Lambda^{\frac{1}{2}}_{2,2}=W^{\frac{1}{2},2}(\bbbr).
$$\end{proof}

\begin{remark}
In fact, if $n=2$, the space $W^{1,1}(\partial\Omega)$ does not embed into
$W^{\frac{1}{2},2}(\partial\Omega)$,
see \cite{BIN}, \cite[Exercise~14.36]{leoni} and \cite[Proposition~4]{SW}.
\end{remark}
\begin{remark}
The reasoning in the case $n=2$ is slightly different than in the case $n>2$ and it has to be different as is explained below.
If we apply the argument from the case $n>2$ to $W^{1,\alpha}(\bbbr)$, $1<\alpha<2$,
then we obtain
$$
W^{1,\alpha}=H^1_\alpha=F^{1}_{\alpha,2}\subset F^{\frac{1}{2}}_{\frac{2\alpha}{2-\alpha},\frac{2\alpha}{2-\alpha}}=
B^{\frac{1}{2}}_{\frac{2\alpha}{2-\alpha},\frac{2\alpha}{2-\alpha}} =
\Lambda^{\frac{1}{2}}_{\frac{2\alpha}{2-\alpha},\frac{2\alpha}{2-\alpha}} =
W^{\frac{1}{2},\frac{2\alpha}{2-\alpha}}(\bbbr).
$$
Since $\frac{2\alpha}{2-\alpha}>2$ it seems that we get what we wanted
$$
W^{1,\alpha}(\partial\Omega)\subset W^{\frac{1}{2},\frac{2\alpha}{2-\alpha}}(\partial\Omega)\subset W^{\frac{1}{2},2}(\partial\Omega).
$$
However, very surprisingly, the last inclusion is {\bf false} as was shown in \cite{mironescus}.
\end{remark}
\begin{remark}
\label{szesnascie}
The extension operator ${\rm Ext}:W^{1-\frac{1}{p},p}(\partial\Omega)\to W^{1,p}(\Omega)$ can be defined by an explicit integral formula
\cite[Theorem~15.21]{leoni} from which it follows that the extension is smooth in $\Omega$
and is continuous up to the boundary if the function that we extend is continuous on $\partial\Omega$.
\end{remark}

\section{Orlicz-Sobolev spaces}
\label{S2}
In this section we briefly describe basic properties of Orlicz-Sobolev spaces that are used in the paper.
Since we do not need any delicate results from the theory of Orlicz-Sobolev spaces, we will try to keep the
definitions as simple as possible.
For more information about Orlicz and Orlicz-Sobolev spaces, see e.g. \cite{AF,IwaniecM,KR,RR}. Our approach is closely related to that in
\cite[Section~4]{HIMO}.

We assume that  $P:[0,\infty) \to [0,\infty)$ is convex, strictly increasing, with $P(0)=0$.
A function satisfying these conditions is called a \emph{Young function}.
We will always assume that $P$ satisfies the {\em doubling condition}:
\begin{itemize}
\item\textbf{doubling condition} or \textbf{$\Delta_2$-condition}
\begin{equation}
\label{doubling cond}
\text{there exists }K>0\text{ such that }P(2t)\leq K\,P(t)\text{ for all }t\geq 0.
\end{equation}
\end{itemize}

We also consider other conditions:

\begin{itemize}
\item\textbf{growth conditions}
\begin{subequations}
\begin{equation}
\frac{P(t)}{t^n}\rightarrow 0\quad\text{ as }t\to\infty,\label{growth 1a}
\end{equation}
\begin{equation}
P(t)\leq C t^n \text{  for some }C>0 \text{ and all }t\geq 1.\label{growth 1b}
\end{equation}
\end{subequations}
\smallskip
\begin{equation}
\text{the function } t^{-\alpha} P(t) \text{ is non-decreasing for some } n>\alpha>n-1,\label{growth 2}
\end{equation}
\item\textbf{divergence condition}
\begin{equation}
\label{div condition}
\int_1^\infty \frac{P(t)}{t^{n+1}}\, dt=\infty.
\end{equation}
\end{itemize}

Let $(X,\mu)$ be a measure space. If a Young function $P$ satisfies the doubling condition
\eqref{doubling cond}, then the {\em Orlicz space} $L^P(X)$ is defined as the class of all measurable functions $f$
such that $\int_X P(|f|)\, d\mu<\infty$. Clearly, $L^P\subset L^1_{\rm loc}$.

The doubling condition implies that $L^P$ is a linear space and actually it is a Banach space with respect to the
{\em Luxemburg norm}
$$
\Vert f\Vert_P=\inf\left\{ k>0:\, \int_X P(|f|/k)\, d\mu\leq 1\right\}\, .
$$
We say that a sequence $(f_k)$ of functions in $L^P(X)$ converges to $f$ {\em in mean}, if
$$
\lim_{k\to\infty} \int_X P(|f_k-f|)\, d\mu=0.
$$
It is well known and easy to show that under the doubling condition convergence in mean is equivalent to convergence in the Luxemburg norm.

Given a Young function $P$ satisfying \eqref{doubling cond}, we define the {\em Orlicz-Sobolev space} $W^{1,P}(\Omega)$ on a domain $\Omega\subset\bbbr^n$
as the class of functions in $W^{1,1}_{\rm loc}(\Omega)$ such that
\begin{equation}
\label{OSNorm}
\Vert f\Vert_{1,P}=\Vert f\Vert_P+\Vert Df\Vert_P<\infty.
\end{equation}
$W^{1,P}(\Omega)$ is a Banach space and smooth functions, $C^\infty(\Omega)$, are dense in it. Similarly as in the case of $L^P$, it is not difficult to show that
$f_k\to f$ in $W^{1,P}$ if and only if we have convergence in mean, i.e.
$$
\int_\Omega P(|f-f_k|)+P(|Df-Df_k|)\, dx\to 0
\quad
\text{as $k\to\infty$.}
$$
The definition of the Orlicz-Sobolev space can be easily extended
to the case of functions on a Riemannian manifold, $W^{1,P}(M)$.
Then Orlicz-Sobolev mappings between manifolds $W^{1,P}(M,N)$ are defined by analogy to \eqref{SobolevMaps}.

The next lemma was proved in \cite{HIMO}: part a) is obvious.
For part b) see \cite[Proposition~4.7]{HIMO}.
Part c) is an easy consequence of part b) and for part d) see \cite[Proposition~4.1]{HIMO} and Lemma~\ref{morrey}.
For the sake of completeness we will include a short and self-contained proof.

\begin{lemma}
\label{lem:Eucl}
Assume $f\in W^{1,P}(M)$, where $M$ is a smooth, compact, Riemannian \mbox{$n$-manifold} with or without boundary.
Assume moreover that $P$ satisfies conditions \eqref{doubling cond}, \eqref{growth 2} and \eqref{div condition}. Then
\begin{itemize}
\item[a)] $\displaystyle f\in W^{1,\alpha}(M)$,
\item[b)] $\displaystyle \liminf_{t\to\infty} t^{n-\alpha} \int_{\{|Df|>t\}}|Df|^{\alpha}=0$,
\item[c)] $\displaystyle \liminf_{t\to\infty} t^n \left|\left\{x\in M~:~\MM |Df|>t\right\}\right|=0$,
\item[d)] if $x\in M\setminus\partial M$ and $R>0$, then for any $\eps>0$, the set of radii $r\in (0,R)$ such that
$\osc_{\Sph^{n-1}(x,r)} f=\sup_{y,z\in \Sph^{n-1}(x,r)} |f(y)-f(z)|<\eps$ has positive linear measure.
\end{itemize}
\end{lemma}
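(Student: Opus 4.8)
The plan is to prove (a)--(d) in order, each using the previous ones; item~(d) is the only one needing a genuinely new idea.

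\emph{Part (a).} By \eqref{growth 2} the function $t\mapsto t^{-\alpha}P(t)$ is non-decreasing, so $t^{\alpha}\le P(t)/P(1)$ for $t\ge 1$ while $t^{\alpha}<1$ for $0\le t<1$; hence $|Df|^{\alpha}\le 1+P(|Df|)/P(1)$ pointwise, and the same holds for $|f|$. Since $M$ is compact, $f\in W^{1,P}(M)$ together with the doubling condition \eqref{doubling cond} gives $\int_M P(|f|)<\infty$ and $\int_M P(|Df|)<\infty$, and integrating the two pointwise bounds yields $f\in W^{1,\alpha}(M)$. For \emph{part (b)} I would argue by contradiction: assume $t^{n-\alpha}B(t)\ge c>0$ for all $t\ge t_0\ge 1$, where $B(t)=\int_{\{|Df|>t\}}|Df|^{\alpha}$. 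The function $Q(t):=t^{-\alpha}P(t)$ is non-decreasing by \eqref{growth 2} and, $P$ being convex with $P(0)=0$, locally absolutely continuous; put $w:=Q'\ge 0$ a.e. On the one hand, Tonelli's theorem and $t^{\alpha}Q(t)=P(t)$ give
$$
\int_{t_0}^{\infty}w(t)B(t)\,dt=\int_{\{|Df|>t_0\}}|Df|^{\alpha}\big(Q(|Df|)-Q(t_0)\big)\le\int_M P(|Df|)<\infty .
$$
On the other hand $\int_{t_0}^{\infty}w(t)B(t)\,dt\ge c\int_{t_0}^{\infty}w(t)t^{\alpha-n}\,dt$, and integrating by parts (using $t^{\alpha-n}Q(t)=t^{-n}P(t)$ and $Q(t)t^{\alpha-n-1}=t^{-n-1}P(t)$),
$$
\int_{t_0}^{\infty}w(t)t^{\alpha-n}\,dt\ge -t_0^{-n}P(t_0)+(n-\alpha)\int_{t_0}^{\infty}\frac{P(t)}{t^{n+1}}\,dt=+\infty
$$
by the divergence condition \eqref{div condition} --- a contradiction.

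\emph{Part (c)} then follows in the standard way: writing $|Df|=|Df|\chi_{\{|Df|\le t\}}+|Df|\chi_{\{|Df|>t\}}$, the first term has maximal function at most $t$, so $\{\MM|Df|>2t\}\subset\{\MM(|Df|\chi_{\{|Df|>t\}})>t\}$, and boundedness of $\MM$ on $L^{\alpha}(M)$ (here $\alpha>n-1\ge 1$) together with Chebyshev's inequality gives $|\{\MM|Df|>2t\}|\le C t^{-\alpha}B(t)$, hence $(2t)^n|\{\MM|Df|>2t\}|\le C' t^{n-\alpha}B(t)$, whose $\liminf$ over $t\to\infty$ is $0$ by~(b).

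\emph{Part (d).} Fix $x\in M\setminus\partial M$ and a small $R$; passing to a chart around $x$ we may assume the metric is Euclidean (the bounded distortion only changes constants, using doubling of $P$). Suppose, for contradiction, that $\osc_{\Sph^{n-1}(x,r)}f\ge\eps$ for a.e.\ $r\in(0,R)$. By~(a) and Fubini in polar coordinates, for a.e.\ $r$ the restriction $f|_{\Sph^{n-1}(x,r)}$ lies in $W^{1,\alpha}(\Sph^{n-1}(x,r))$, and since $\alpha>n-1$ Morrey's inequality (Lemma~\ref{morrey}) converts the oscillation hypothesis into
$$
\int_{\Sph^{n-1}(x,r)}|Df|^{\alpha}\,d\sigma\ge\delta\,r^{\,n-1-\alpha},\qquad \delta=\frac{n\omega_n\eps^{\alpha}}{C(n,\alpha)^{\alpha}},
$$
for a.e.\ $r\in(0,R)$. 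Choosing the threshold $\tau(r)=c_0/r$ with $c_0^{\alpha}=\delta/(2n\omega_n)$ makes the part of this integral over $\{|Df|\le\tau(r)\}$ at most $\tfrac12\delta\,r^{\,n-1-\alpha}$, so at least half remains over $\{|Df|>\tau(r)\}$; using $P(t)\ge\tau(r)^{-\alpha}P(\tau(r))\,t^{\alpha}$ for $t\ge\tau(r)$ (monotonicity of $t^{-\alpha}P(t)$),
$$
\int_{\Sph^{n-1}(x,r)}P(|Df|)\,d\sigma\ge\frac{P(\tau(r))}{\tau(r)^{\alpha}}\cdot\frac{\delta}{2}\,r^{\,n-1-\alpha}=\frac{\delta}{2c_0^{\alpha}}\,r^{\,n-1}P\!\Big(\frac{c_0}{r}\Big).
$$
Integrating in $r$ and substituting $s=c_0/r$ turns the right-hand side into a positive constant times $\int_{c_0/R}^{\infty}P(s)s^{-n-1}\,ds=+\infty$ by \eqref{div condition}, whereas the left-hand side integrates (Fubini in polar coordinates) to $\int_{B(x,R)}P(|Df|)<\infty$. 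This contradiction shows that $\{r\in(0,R):\osc_{\Sph^{n-1}(x,r)}f<\eps\}$ has positive measure.

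\emph{Main obstacle.} The delicate point is (d). A tempting route is to combine (c) with the Lipschitz estimate for $f$ on $\{\MM|Df|\le t\}$ (Lemma~\ref{lemma lip}), but it fails: the open set $\{\MM|Df|>t\}$ can have arbitrarily small measure and still meet every small sphere about $x$ (a thin cusp along a ray), so one cannot extract a sphere on which $f$ is Lipschitz with small oscillation. The workable mechanism instead uses Morrey's inequality to turn a hypothetical oscillation lower bound into a lower bound on $\int_{\Sph^{n-1}(x,r)}|Df|^{\alpha}\,d\sigma$, and then exploits the interplay between the exponent $\alpha\in(n-1,n)$ and the growth of $P$: the worst concentration of $|Df|$ has size $\sim c_0/r$ on a small cap of each sphere, and exactly there the divergence condition $\int^{\infty}P(t)/t^{n+1}\,dt=\infty$ forces $\int_{B(x,R)}P(|Df|)$ to be infinite.
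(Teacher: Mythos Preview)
Your proofs of (a)--(c) are essentially identical to the paper's: same use of the monotonicity of $t^{-\alpha}P(t)$ for (a), the same Fubini/integration-by-parts computation for (b) (the paper writes $\Psi(t)=t^{\alpha-n}(t^{-\alpha}P(t))'$, which is exactly your $t^{\alpha-n}w(t)$), and the same truncation-plus-maximal-inequality argument for (c).

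For (d) you take a genuinely different, and correct, route. Both arguments begin with Morrey's inequality to convert the oscillation hypothesis into $\int_{\Sph^{n-1}(x,r)}|Df|^{\alpha}\,d\sigma\ge\delta\,r^{\,n-1-\alpha}$. The paper then integrates this over $r\in(0,\rho)$ to get $\int_{B(x,\rho)}|Df|^{\alpha}\ge C\eps^{\alpha}\rho^{n-\alpha}$, splits at a level $t$ chosen proportional to $1/\rho$, and deduces a uniform positive lower bound on $t^{n-\alpha}\int_{\{|Df|>t\}}|Df|^{\alpha}$, contradicting~(b). You instead stay on each sphere, use the same threshold idea $\tau(r)=c_0/r$, but convert the surviving tail directly into $P(|Df|)$ via the monotonicity $P(t)\ge\tau^{-\alpha}P(\tau)\,t^{\alpha}$ for $t\ge\tau$; integrating in $r$ and substituting $s=c_0/r$ then invokes the divergence condition \eqref{div condition} directly, without going through (b). The paper's version makes the implication (b)\,$\Rightarrow$\,(d) explicit and keeps the divergence condition confined to the proof of (b); your version is more self-contained and shows that (d) follows from~\eqref{div condition} independently of~(b).
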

\begin{remark}\label{rem morrey}
Since for almost all $r\in (0,R)$ we have $f\in W^{1,\alpha}(\Sph^{n-1}(x,r))$, where $\alpha>n-1$, the function $f$ restricted to
$\Sph^{n-1}(x,r)$ has a H\"older continuous representative and the oscillation in d) is defined for that representative.
\end{remark}
\begin{proof}
Note that \eqref{growth 2} implies that
$P(t)\geq P(1)t^{\alpha}$ for $t\geq 1$, which immediately gives a).

Let $\alpha$ be as in \eqref{growth 2} and set $\Psi(t)=t^{\alpha-n}(t^{-\alpha}P(t))'$.
Then, integrating by parts, we get that
$$
\int_{1}^k\Psi(t)\, dt=\int_{1}^k t^{\alpha-n}(t^{-\alpha}P(t))'\, dt=
t^{-n}P(t)\Big|_1^k+(n-\alpha)\int_{1}^k\frac{P(t)}{t^{n+1}}\, dt
$$
and letting $k\to\infty$ yields
\begin{equation}
\label{eq1}
\int_1^\infty \Psi(t)\, dt = +\infty.
\end{equation}
On the other hand, for all $T>1$,
\begin{equation*}
\begin{split}
&\left(\int_{1}^T \Psi(t)\, dt\right)\inf_{t>1}\left(t^{n-\alpha}\int_{\{|Df|>t\}}|Df|^\alpha\, dx\right)
\leq \int_{1}^T\Psi(t)t^{n-\alpha}\int_{\{|Df|>t\}}|Df|^\alpha\, dx\,dt\\
&\stackrel{*}{\leq}\int_{\{|Df|>1\}}|Df|^\alpha \left(\int_{1}^{|Df|}\Psi(t) t^{n-\alpha}\,dt\right)\, dx
=\int_{\{|Df|>1\}}|Df|^\alpha \left(\int_{1}^{|Df|}(t^{-\alpha} P(t))'\, dt\right)\, dx\\
&= \int_{\{|Df|>1\}}|Df|^\alpha\left(|Df|^{-\alpha} P(|Df|)-P(1)\right)\, dx\leq
\int_M P(|Df|)\, dx<\infty,
\end{split}
\end{equation*}
where the inequality $\stackrel{*}{\leq}$ follows from the obvious implication
$$
\left(1\leq t\leq T,\, |Df|>t\right) \Rightarrow \left( |Df|>1,\, 1\leq t<|Df|\right).
$$
Letting $T\to\infty$ and using \eqref{eq1} we see  that
$\inf_{t>1}\left(t^{n-\alpha}\int_{\{|Df|>t\}}|Df|^\alpha\right)=0$, which, in turn, implies b).

Let $h=|Df|\chi_{\{|Df|>t/2\}}$. Then $|Df|\leq h+t/2$, so
$\{\MM |Df|>t\}\subset\{\MM h>t/2\}$ and Chebyshev's inequality along with boundedness of the maximal function on
$L^\alpha(M)$ yield
\begin{equation*}
\begin{split}
t^n |\{\MM |Df|>t\}|&\leq t^n |\{\MM h>t/2\}|
\leq t^n \left(\frac{2}{t}\right)^\alpha \Vert \MM h\Vert_\alpha^\alpha\\
&\leq C(M,\alpha)t^{n-\alpha} \Vert h\Vert_\alpha^\alpha
= C t^{n-\alpha}\int_{\{|Df|>t/2\}}|Df|^\alpha.
\end{split}
\end{equation*}
Combining this estimate with b) immediately yields c).

It remains to prove d).

There is $0<R'\leq R$ such that the exponential map $\exp_x:T_x M\to M$ maps Euclidean balls
$\bbbb^n(0,r)\subset T_x M$ onto Riemannian balls $\bbbb^n(x,r)\subset M$, in a diffeomorphic way, for all $0<r\leq R'$
 (see \cite{jost}).
In particular, it maps Euclidean spheres centered at $0$ onto spheres in $M$ centered at $x$.
Thus we can assume that
$\bbbb^n(x,R')=\bbbb^n(0,R')$ is the Euclidean ball.

According to Lemma~\ref{morrey} it suffices to show that the set of $r\in (0,R')$ such that
$$
r\, \Bigg(\, \mvint_{\Sph^{n-1}(0,r)}|Df|^\alpha\, d\sigma\Bigg)^{1/\alpha}<\eps
$$
has positive linear measure. Suppose to the contrary, that there is $\eps>0$ such that
$$
r\, \Bigg(\, \mvint_{\Sph^{n-1}(0,r)}|Df|^\alpha\, d\sigma\Bigg)^{1/\alpha}\geq \eps
\quad
\text{for almost all $r\in (0,R')$.}
$$
Integration in spherical coordinates gives that for every $0<\rho<R'$
$$
\int_{\bbbb^n(0,\rho)} |Df|^\alpha\, dx\geq \frac{n\omega_n\eps^\alpha\rho^{n-\alpha}}{n-\alpha}\, .
$$
For $t>0$, $|Df|^\alpha\leq |Df|^\alpha\chi_{\{|Df|>t\}}+t^\alpha$, so
$$
\frac{n\omega_n\eps^\alpha\rho^{n-\alpha}}{n-\alpha}\leq
\int_{\bbbb^n(0,\rho)} |Df|^\alpha\, dx\leq\int_{\{|Df|>t\}}|Df|^\alpha+\omega_n\rho^nt^\alpha\, .
$$
Let $t>0$ be such that
$$
\frac{n\omega_n\eps^\alpha\rho^{n-\alpha}}{n-\alpha} = 2\omega_n\rho^nt^\alpha\, ,
\quad
\text{that is}
\quad
t=\left(\frac{n}{2(n-\alpha)}\right)^{1/\alpha}\frac{\eps}{\rho}\, .
$$
Then
$$
\int_{\{|Df|>t\}} |Df|^\alpha\, dx\geq \omega_n\rho^nt^\alpha =
\omega_n\eps^n \left(\frac{n}{2(n-\alpha)}\right)^{n/\alpha} t^{\alpha-n}\, ,
$$
$$\text{thus}\qquad  C(n,\alpha)\eps^n\leq t^{n-\alpha} \int_{\{|Df|>t\}} |Df|^\alpha\, dx.
$$
When $\rho\to 0$, we have $t\to\infty$, and hence
$$
\liminf_{t\to\infty} t^{n-\alpha}\int_{\{|Df|>t\}} |Df|^\alpha\, dx\geq
C(n,\alpha)\eps^n,
$$
which contradicts b).
\end{proof}

\begin{lemma}
\label{lemma approx}
Assume $f\in W^{1,P}(M,N)$, where $M$ and $N$ are compact Riemannian manifolds and $\dim M=n$.
$M$ may have boundary, but $N$ has no boundary.
Assume moreover that $P$ satisfies conditions \eqref{doubling cond},  \eqref{growth 2} and \eqref{div condition}.
Then there exists a constant $C$ depending on $M$ and $N$ only, a sequence $t_i\to\infty$, and continuous maps $f_i\in C(M,N)$ such that
\begin{itemize}
\item[a)] $f_i$ is $Ct_i$-Lipschitz,
\item[b)] $f_i$ coincides with $f$ on the set $\{\MM |Df| \leq t_i\}$,
\item[c)] $\lim_{i\to\infty} t_i^n \left|\left\{x\in M~:~\MM |Df|>t_i\right\}\right|=0$.
\end{itemize}
\end{lemma}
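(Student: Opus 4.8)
The plan is to truncate $f$ to a Lipschitz map on a large closed set and then extend that map to an $N$-valued Lipschitz map, invoking part~(d) of Lemma~\ref{lem:Eucl} to remove the topological obstruction that would otherwise prevent the extension from taking values in $N$. First, Lemma~\ref{lem:Eucl}(c) produces a sequence $t_i\nearrow\infty$ with $t_i^n|\{\MM|Df|>t_i\}|\to 0$, which is exactly conclusion~(c). Put $E_i:=\{\MM|Df|\le t_i\}$; since $\MM|Df|$ is lower semicontinuous, $E_i$ is closed and $B_i:=M\setminus E_i$ is open. Applying Lemma~\ref{lemma lip} to the coordinate functions of $f\colon M\to N\subset\bbbr^k$, the restriction $f|_{E_i}$ is $Ct_i$-Lipschitz, and because $N$ is closed and $f(x)\in N$ a.e., continuity forces $f(E_i)\subset N$. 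So the whole problem reduces to extending $f|_{E_i}$ to a map $f_i\in C(M,N)$ that is $Ct_i$-Lipschitz: then (a) and (b) hold automatically.

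Next I would extend each coordinate of $f|_{E_i}$ by the McShane formula, obtaining a $Ct_i$-Lipschitz map $F_i\colon M\to\bbbr^k$ with $F_i|_{E_i}=f$. Fix a tubular neighborhood $U\supset N$ in $\bbbr^k$ with a smooth nearest-point retraction $\Pi\colon U\to N$ (Lipschitz after a slight shrinking), and a number $\tau_0>0$ smaller than both the width of $U$ and the convexity radius of $N$. Since $F_i(y)\in N$ for $y\in E_i$, one has $\dist(F_i(x),N)\le Ct_i\,\dist_M(x,E_i)$ for all $x$, so $F_i$ maps the collar $V_i:=\{x:\dist_M(x,E_i)<c/t_i\}$ into $U$ for a small $c=c(M,N)$. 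Hence $g_i:=\Pi\circ F_i\colon V_i\to N$ is continuous, $Ct_i$-Lipschitz, and equals $f$ on $E_i$, since $\Pi|_N=\mathrm{id}$. This settles everything within distance $\sim 1/t_i$ of $E_i$, in particular a collar of $\partial B_i$, where $f$ itself need not be continuous.

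It remains to extend $g_i$ off the collar over the compact set $K_i:=M\setminus\{x:\dist_M(x,E_i)<c/(2t_i)\}$, which lies inside the open bad set $B_i$. I would cover $K_i$ by finitely many Euclidean coordinate balls $B(p_j,\rho_j)$ with $\overline{B(p_j,4\rho_j)}\subset B_i$ and $\rho_j\gtrsim 1/t_i$ --- possible because $\dist_M(p_j,E_i)\gtrsim 1/t_i$ on $K_i$ --- and process them one at a time. For the $j$-th ball, Lemma~\ref{lem:Eucl}(d), applied to the current map (which is $f$ away from the already treated balls and Lipschitz on each treated ball, hence still in $W^{1,P}$), yields a radius $r_j\in(\rho_j,2\rho_j)$ on whose sphere $\Sph^{n-1}(p_j,r_j)$ the current map oscillates by less than $\tau_0$; its values there then lie in a geodesically convex, hence contractible, ball $D_j\subset N$, so $B(p_j,r_j)$ can be filled by an extension valued in $D_j$. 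The lower bound $r_j\gtrsim 1/t_i$, which traces back to the choice of $t_i$, is what keeps the Lipschitz constant of each filled-in piece $\lesssim\tau_0/r_j\lesssim t_i$; since $r_j\ge\rho_j$, the enlarged balls still cover $K_i$. Gluing these modifications to $g_i$ on the collar --- they agree, being built from common boundary data and supported strictly inside $B_i$ --- yields the required $f_i$. (If $\partial M\neq\emptyset$, one first embeds $M$ isometrically in a slightly larger manifold without boundary and extends $f|_{E_i}$ Lipschitzly a little past $\partial M$, so that Lemma~\ref{lem:Eucl}(d) applies at every point used.)

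The main obstacle is this last step. One must simultaneously keep the intermediate maps in $W^{1,P}$ so that Lemma~\ref{lem:Eucl}(d) can be reapplied at each stage, make the fillings inside the successive overlapping balls mutually consistent and continuous across the collar, and --- the truly delicate point --- arrange that the boundary data on the chosen spheres is Lipschitz (replacing it by a further truncation when necessary) before filling, so that the Lipschitz constant of $f_i$ ends up comparable to $t_i$ rather than merely finite. The topological content --- that sufficiently small oscillation on a sphere forces the restriction of the map to that sphere into a contractible piece of $N$, and thus kills the obstruction to filling --- is precisely what the divergence condition buys, through Lemma~\ref{lem:Eucl}(d).
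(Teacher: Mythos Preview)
You have overlooked the one observation that makes the whole argument short: condition~(c), which you derived at the outset, already forces the bad set $B_i=\{\MM|Df|>t_i\}$ to be \emph{thin} relative to the scale $1/t_i$. Indeed, if $d_i$ denotes the radius of the largest geodesic ball contained in $B_i$, then $|B_i|\ge c\,d_i^n$, so $t_i^n|B_i|\to 0$ gives $t_i d_i\to 0$. In particular every point of $M$ lies within distance $d_i=o(1/t_i)$ of $E_i$, and your McShane extension $F_i$ satisfies $\dist(F_i(x),N)\le Ct_i\,\dist_M(x,E_i)\le Ct_id_i\to 0$ uniformly in $x$. Hence for all large $i$ the map $F_i$ already lands in the tubular neighborhood $U$ on \emph{all} of $M$, not just on your collar $V_i$; your set $K_i$ is empty, and $f_i:=\Pi\circ F_i$ is the desired map. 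This is exactly the paper's proof.

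Your elaborate filling procedure over $K_i$ is therefore unnecessary, and as written it also has a genuine gap: Lemma~\ref{lem:Eucl}(d) guarantees small-oscillation radii only in an interval of the form $(0,R)$, not in a prescribed annular window $(\rho_j,2\rho_j)$; the good radius it hands you could be far smaller than $\rho_j$, which would destroy both the covering property and the Lipschitz bound $\lesssim t_i$ that you need. The iterative bookkeeping you flag (staying in $W^{1,P}$, gluing consistently, replacing boundary data by Lipschitz data) would add further difficulties. All of this evaporates once you notice that $t_id_i\to 0$.
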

\begin{proof}
Recall that, by Lemma~\ref{lem:Eucl} c),
$\liminf_{t\to\infty} t^n \left|\left\{x\in M~:~\MM |Df|>t\right\}\right|=0$. Thus, let $(t_i)$ denote such an increasing sequence that $\lim_{i\to\infty}t_i=+\infty$ and
\begin{equation}\label{lab 1}
\lim_{i\to\infty} t_i^n \left|\left\{x\in M~:~\MM |Df|>t_i\right\}\right|=0.
\end{equation}
By Lemma \ref{lemma lip},
$$
f\big|_{\{\MM |Df|\leq t_i\}}\colon \{\MM |Df|\leq t_i\}\to N\subset\bbbr^k
\qquad
\text{is $C t_i$-Lipschitz.}
$$
Let $F_i:M\to\R^k$ denote its $C t_i$-Lipschitz extension to the whole $M$. Then $F_i$ coincides with $f$ on $\{\MM |Df|\leq t_i\}$ and is $C t_i$-Lipschitz; however, it does not necessarily map $M$ to~$N$.

Let $d_i$ be the radius of the largest Riemannian ball contained in the
open set $\{\MM |Df|>t_i\}$. Then, for any $x\in \{\MM |Df|> t_i\}$ we can find $y\in \{\MM (|Df|)\leq t_i\}$ such that the Riemannian distance $d(x,y)$ is not more than $d_i$. Then,
$$
\dist(F_i(x),N)\leq |F_i(x)-f(y)|=|F_i(x)-F_i(y)|\leq C t_i |x-y|\leq C t_i d_i.
$$

Note also that, by \eqref{lab 1}, we have $\lim_{i\to\infty}t_i^n d_i^n=0$, thus $\lim_{i\to\infty}t_i d_i=0$, which implies that
$\dist(F_i(x),N)\leq  C t_i d_i \xrightarrow{i\to\infty} 0$. This convergence is uniform in $x$, thus the image of the mapping $F_i$ lies in a tubular neighborhood of $N$.
As $N$ is compact, for a sufficiently small tubular neighborhood of $N$ there is a well defined, smooth and Lipschitz nearest point projection $\pi$ onto $N$; taking $f_i=\pi\circ F_i$ gives the desired sequence. Claims a) and b) are then obvious.
\end{proof}

As a corollary we obtain a new and much shorter proof of Theorem~5.2 in \cite{HIMO}. For some generalizations of this result, see \cite{ccianchi}.

\begin{corollary}
\label{cor:density}
Assume $f\in W^{1,P}(M,N)$, where $M$ and $N$ are compact Riemannian manifolds and $\dim M=n$.
$M$ may have boundary, but $N$ has no boundary.
Assume moreover that $P$ satisfies conditions \eqref{doubling cond}, \eqref{growth 1b}, \eqref{growth 2} and \eqref{div condition}. Then $C^{\infty}(M,N)$ is dense in $W^{1,P}(M,N)$.
\end{corollary}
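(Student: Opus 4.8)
The plan is to combine Lemma~\ref{lemma approx} with a routine smoothing argument. Lemma~\ref{lemma approx} supplies $Ct_i$-Lipschitz maps $f_i\in C(M,N)$ that agree with $f$ on the good set $G_i:=\{\MM|Df|\le t_i\}$ and satisfy $t_i^n|M\setminus G_i|\to 0$; since $t_i\to\infty$, this also forces $|M\setminus G_i|\to 0$. First I would show $f_i\to f$ in $W^{1,P}(M,N)$. Because $M$ and $N$ are compact (so $N$ is bounded in $\R^k$), $f$ and every $f_i$ lie in $L^P(M,\R^k)$, the Orlicz-Sobolev norms are finite, and --- by the doubling condition --- it is enough to prove convergence in mean, $\int_M P(|f-f_i|)+\int_M P(|Df-Df_i|)\to 0$. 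On $G_i$ we have $f=f_i$ a.e., hence $Df=Df_i$ a.e. there, so both integrals reduce to integrals over the bad set $B_i:=M\setminus G_i$.

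On $B_i$ the first term is at most $P(\diam N)\,|B_i|\to 0$, since the maps take values in the bounded set $N$. For the gradient term, Lemma~\ref{lemma approx} a) gives $|Df_i|\le Ct_i$ a.e., so on $B_i$ one has $|Df-Df_i|\le |Df|+Ct_i$ and the doubling condition yields $\int_{B_i}P(|Df-Df_i|)\le K\int_{B_i}P(|Df|)+K\,P(Ct_i)\,|B_i|$. The first summand tends to $0$ because $P(|Df|)\in L^1(M)$ and $|B_i|\to 0$ (absolute continuity of the integral). For the second summand I would invoke the growth condition \eqref{growth 1b}: for large $i$, $P(Ct_i)\le C_1(Ct_i)^n=C_2\,t_i^n$, whence $P(Ct_i)\,|B_i|\le C_2\,t_i^n|B_i|\to 0$ by Lemma~\ref{lemma approx} c). This establishes $f_i\to f$ in $W^{1,P}$.

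It then remains to approximate each Lipschitz $f_i$ by a smooth map into $N$ and to pass to a diagonal sequence. For fixed $i$, I would mollify $f_i$ in local charts (using a partition of unity, or an isometric embedding of $M$) to obtain smooth maps $h_\eps\colon M\to\R^k$ with $h_\eps\to f_i$ uniformly and a.e., with $Dh_\eps\to Df_i$ a.e. along a subsequence, and with $\|Dh_\eps\|_\infty$ bounded by a multiple of $\|Df_i\|_\infty$. Since $f_i(M)\subset N$ and $N$ is compact, for $\eps$ small the image of $h_\eps$ lies in a tubular neighborhood of $N$ on which the smooth, Lipschitz nearest point projection $\pi$ is defined (the same $\pi$ as in the proof of Lemma~\ref{lemma approx}); then $g_\eps:=\pi\circ h_\eps\in C^\infty(M,N)$ satisfies $g_\eps\to f_i$ a.e. with uniformly bounded derivatives. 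Consequently $P(|g_\eps-f_i|)$ and $P(|Dg_\eps-Df_i|)$ are bounded by a constant lying in $L^1(M)$ and tend to $0$ a.e., so by dominated convergence $g_\eps\to f_i$ in $W^{1,P}(M,N)$. Choosing, for each $i$, some $g_i\in C^\infty(M,N)$ with $\|g_i-f_i\|_{1,P}<1/i$ and combining with the previous step gives $g_i\to f$ in $W^{1,P}(M,N)$, which is the asserted density.

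The main obstacle is the estimate of $\int_{B_i}P(|Df_i|)$, equivalently of $P(Ct_i)\,|B_i|$, in the second paragraph: the Lipschitz constants of the approximants blow up like $t_i$, and this growth has to be defeated using the quantitative smallness $t_i^n|B_i|\to 0$ furnished by Lemma~\ref{lemma approx} c). The balance succeeds precisely because \eqref{growth 1b} gives $P(t)\le Ct^n$, and this is the only place where \eqref{growth 1b} enters. The remaining ingredients --- the reduction to the bad set, and the smoothing of Lipschitz maps into a compact target by mollification followed by nearest-point projection --- are routine.
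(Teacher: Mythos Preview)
Your proposal is correct and follows essentially the same approach as the paper: apply Lemma~\ref{lemma approx}, reduce the mean convergence to the bad set $\{\MM|Df|>t_i\}$, control the gradient term via $P(|Df|)\in L^1$ plus the estimate $P(Ct_i)\,|B_i|\le C\,t_i^n|B_i|\to 0$ from \eqref{growth 1b} and Lemma~\ref{lemma approx}~c), and then upgrade Lipschitz to smooth by mollification followed by nearest-point projection with dominated convergence. The only cosmetic differences are that the paper bounds $P(|f-f_i|)$ via $P(|f|)+P(A_N)$ rather than your (slightly cleaner) $P(\diam N)$, and writes the smoothing step more tersely.
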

\begin{proof}
Assume $f\in W^{1,P}(M,N)$. By Lemma \ref{lemma approx} there is a sequence $t_i\to\infty$ and $Ct_i$\nobreak\mbox{-}\nobreak{}Lipschitz functions $f_i\in C(M,N)$, $i=1,2,\ldots$, that coincide with $f$ on the sets $\{\MM |Df|\leq t_i\}$, respectively.

We shall prove that $f_i$ converge to $f$ in $W^{1,P}(M,N)$ {\em in the mean}.

Let $A_N=\max\{|x|~:~x\in N\}$ be the maximum distance of a point in $N$ to the origin. Naturally, $\|f_i\|_\infty\leq A_N$. Also, since $f_i$ is $Ct_i$-Lipschitz, $|Df_i|\leq Ct_i$ a.e.

Convexity of $P$ and the doubling condition \eqref{doubling cond} imply immediately that for $a,b\geq 0$ we have $P(a+b)\leq \frac{K}{2}(P(a)+P(b))$. Also, by \eqref{growth 1b}, we have $P(t)\leq C t^n$ for large $t$.
\begin{equation}
\begin{split}
\int_M &\left(P(|f-f_i|)+P(|Df-Df_i|)\right)=
\int_{\{\MM |Df|>t_i\}}\left(P(|f-f_i|)+P(|Df-Df_i|)\right)\\
&\qquad\leq \frac{K}{2}\int_{\{\MM |Df|>t_i\}}\left(P(|f|)+P(A_N)+P(|Df|)+P(Ct_i)\right)\\
&\qquad \leq \frac{K}{2} \int_{\{\MM |Df|>t_i\}} \left(P(|f|)+P(|Df|)\right)+
\frac{K}{2}|\{\MM |Df|>t_i\}|(P(A_N)+Ct^n_i)\\
&\qquad=I_i+J_i.
\end{split}
\end{equation}
The term $I_i$ tends to 0 as $i\to\infty$, because the function $P(|f|)+P(|Df|)$ is integrable, and the measure of $\{\MM |Df|>t_i\}$ goes to zero.
Likewise, $J_i\to 0$ as $i\to \infty$, by Lemma \ref{lemma approx} c).

This proves the density of Lipschitz maps in $W^{1,P}(M,N)$. To pass to smooth maps, it suffices to show that we can approximate a Lipschitz map $g \in C(M,N)$ by a smooth one {\em in the mean}.

Since $N$ is isometrically embedded in $\R^k$,
there is a sequence $\tilde{g}_i\in C^{\infty}(M,\R^k)$ of smooth maps uniformly approximating $g$, obtained by a standard convolution with a mollifier. One immediately checks that $\tilde{g}_i$ have derivatives uniformly bounded by the Lipschitz constant $L$ of $g$. Then we compose the $\tilde{g}_i$ with the nearest point projection $\pi$ onto $N$ (by uniform convergence of $\tilde{g_i}$, for large $i$ the images of $\tilde{g_i}$ lie arbitrarily close to $N$), getting $g_i=\pi\circ\tilde{g}_i\in C^{\infty}(M,N)$. Then $|Dg_i|$ are uniformly bounded by $CL$ for some $C$, $g_i$ converge uniformly to $g$ and $Dg_i$ converge to $Dg$ a.e. in $M$. The Dominated Convergence Theorem yields that $g_i$ converge to $g$ in $W^{1,P}(M,N)$ in the mean.
\end{proof}

\section{Discontinuous mappings in $W^{1,P}(M,N)$}\label{sec disc}
\begin{proposition}
\label{prop disc}
Assume $M$, $N$ are smooth, oriented $n$-dimensional Riemannian manifolds without boundary, $N$ is compact and the universal cover $\tilde{N}$ of $N$ is either diffeomorphic (if $n=4$) or homeomorphic (if $n\neq 4$) to $\Sph^n$.
Assume moreover that $P$ satisfies conditions
\eqref{doubling cond}, \eqref{growth 1a}, \eqref{growth 2} and \eqref{div condition}.
Then there exists a discontinuous mapping $G\in W^{1,P}(M,N)$ of finite distortion. If in addition $M=\bbbb^n$ is a Euclidean ball, then we can construct a discontinuous mapping $G\in W^{1,P}(\bbbb^n,N)$ that has almost everywhere positive Jacobian.
\end{proposition}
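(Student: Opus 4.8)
The plan is to build $G$ by an infinite bubbling construction. Fix a point $x_\infty$ in a coordinate ball $U\cong\bbbb^n$ of $M$, choose pairwise disjoint balls $B_i=B(a_i,r_i)\subset U$ with $a_i\to x_\infty$, $r_i\downarrow0$, let $G$ on each $B_i$ be a rescaled copy of one ``model bubble'', and let $G\equiv p_0$ off $\bigcup_iB_i$ for a fixed $p_0\in N$. Since every bubble covers a fixed ball around some point $q_0\neq p_0$, the map $G$ takes values near $q_0$ in every neighbourhood of $x_\infty$ while being $p_0$ elsewhere nearby, so it has no continuous representative at $x_\infty$. What must be verified is that the bubble has finite distortion and $W^{1,P}$-energy tending to $0$ as its scale shrinks (so $\sum_i\int_{B_i}P(|DG|)<\infty$ can be arranged), that $G$ is weakly differentiable across each $\partial B_i$ and at $x_\infty$ (a single point is removable for $W^{1,1}$ of a bounded function since $n\ge2$), and — for the last assertion — that $J_G>0$ almost everywhere.

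The model bubble is conformal. With $\pi\colon\Sph^n\setminus\{p_*\}\to\R^n$ stereographic projection, the map $b_\varepsilon(x)=\pi^{-1}(x/\varepsilon)$ on $\bbbb^n(0,r)$ is an orientation-preserving diffeomorphism onto $\Sph^n$ minus a small cap about $p_*$, it is conformal, so $J_{b_\varepsilon}=|Db_\varepsilon|^n>0$ and $b_\varepsilon$ has distortion $1$, and $|Db_\varepsilon(x)|=2\varepsilon/(\varepsilon^2+|x|^2)$. On $\{r/2<|x|<r\}$ one replaces $b_\varepsilon$ by $\pi^{-1}$ composed with a radial diffeomorphism onto $\R^n$ blowing up at $|x|=r$, which keeps $J>0$ and makes $b_\varepsilon$ extend continuously by the constant $p_*$ across $\partial\bbbb^n(0,r)$; if the scales satisfy $r^2/\varepsilon\le C$, this annulus contributes energy $\lesssim r^nP(C'/r)$. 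Splitting $\int_{\bbbb^n(0,r)}P(|Db_\varepsilon|)\,dx$ at $|x|=\varepsilon$ and using only convexity of $P$ together with the growth condition \eqref{growth 1a} ($P(t)/t^n\to0$), one shows this integral tends to $0$ as $\varepsilon\to0$; this is exactly where the strict inclusion $W^{1,n}\subsetneq W^{1,P}$ is used, as for $P(t)=t^n$ it would stay bounded below — consistent with the obstruction of Theorem~\ref{T1}. To take values in $N$ we post-compose with a Lipschitz surjection $\Psi\colon\Sph^n\to N$ with Jacobian $\ge0$ and bounded distortion off a null set, and set $p_0:=\Psi(p_*)$. The covering $\tilde N\to N$ is finite-sheeted (as $\tilde N$ is compact) and a local diffeomorphism, so it suffices to build such a map $\Sph^n\to\tilde N$; under the hypothesis $\tilde N$ is a twisted sphere $D^n\cup_\varphi D^n$ (with $\varphi$ the identity when $\tilde N$ is diffeomorphic to $\Sph^n$ — automatic for $n\le3$ by Corollary~\ref{P2}, assumed for $n=4$ — and for $n\ge5$ because homotopy $n$-spheres are twisted spheres), so one takes the identity on one disk and the radial cone $r\omega\mapsto r\,\varphi^{-1}(\omega)$ on the other; the two pieces agree along the equator since $\varphi$ and $\varphi^{-1}$ cancel, giving a Lipschitz homeomorphism whose Jacobian is positive off a null set. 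Then $h_\varepsilon:=\Psi\circ b_\varepsilon$ has $J_{h_\varepsilon}>0$ a.e.\ and finite distortion (using that $b_\varepsilon$ is a local diffeomorphism, so pulls null sets back to null sets) and, by the doubling condition, still has energy $\to0$; scaling $h_{\varepsilon_i}$ into $B_i$ with $\varepsilon_i=r_i^2$, $r_i\downarrow0$ rapidly, completes the first assertion.

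For the positive-Jacobian statement with $M=\bbbb^n$ I replace the constant value off the bubbles by a Lipschitz immersion $\Phi=\iota\circ(\delta\,\cdot)$ into a small chart $\iota$ of $N$ (so $J_\Phi=\delta^nJ_\iota>0$), take $x_\infty$ with $\Phi(x_\infty)=p_0$, run the conformal bubble on an inner ball of each $B_i$, and on the intervening collar interpolate between $\Phi|_{\partial B_i}$ and the bubble's nearly constant boundary data by a Lipschitz map with $J\ge0$ — possible because both boundary maps are close to the constant $p_0$ and the prescribed signed volume has the right sign — with collar energy again $\lesssim r_i^nP(C'/r_i)\to0$; then $J_G>0$ a.e.\ on $\bbbb^n$. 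I expect the two genuinely delicate points to be the energy estimate $\int P(|Db_\varepsilon|)\to0$, which hinges on \eqref{growth 1a}, and the construction of $\Psi$ with nonnegative Jacobian — the step where the topology of $\tilde N$ is essential, and which is exactly what fails when $\tilde N$ is not a rational homology sphere, in accordance with the first part of Theorem~\ref{T3}.
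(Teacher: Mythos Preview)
Your finite–distortion construction is correct and is a legitimate variant of the paper's argument. Both proofs are bubbling constructions: the paper quotes a single map $F\colon\Sph^n\to\Sph^n$ from \cite{HIMO} built by stretching nested latitudinal annuli $S^{2\theta_k}_{\theta_k}$ accumulating at the north pole, then sets $G=p\circ F\circ q$ with $q\colon M\to\Sph^n$ collapsing $M\setminus\bbbb$ to a point and $p\colon\Sph^n\to N$ the bi-Lipschitz covering map; you instead place disjoint conformal bubbles in balls $B_i\to x_\infty$ and are constant elsewhere. Your energy estimate $\int P(|Db_\varepsilon|)\to 0$ from \eqref{growth 1a} is correct, and your construction of the Lipschitz surjection $\Psi\colon\Sph^n\to N$ via the twisted-sphere structure reproduces what the paper obtains by citing \cite{Sullivan,VaisalaTukia,KondoTanaka}. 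The trade-off is that your version makes the role of \eqref{growth 1a} completely transparent (it is exactly what forces the conformal energy to vanish), while the paper's version imports the $W^{1,P}$ estimate as a black box from \cite{HIMO}.

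The genuine gap is in your last paragraph, the case $M=\bbbb^n$ with $J_G>0$ a.e. You assert that on each collar one can interpolate between $\Phi|_{\partial B_i}$ and the bubble's nearly constant boundary data by a Lipschitz map with $J\ge 0$, ``because both boundary maps are close to $p_0$ and the prescribed signed volume has the right sign''. This is not justified: producing a Lipschitz extension with nonnegative Jacobian between two prescribed close-to-constant boundary data is a nontrivial problem, and neither smallness nor a degree/volume condition guarantees it. Moreover, even granting $J\ge 0$ on the collars, you then claim $J_G>0$ a.e., which would require $J>0$ a.e.\ on the collars --- a strictly stronger statement you do not address. The paper sidesteps this entirely: its map $F\colon\Sph^n\to\Sph^n$ is the \emph{identity} on the complementary slices $S^{\theta_k}_{2\theta_{k+1}}$ (so $J_F=1$ there) and an orientation-preserving diffeomorphism on each stretched slice (so $J_F>0$ there); hence $J_F>0$ a.e.\ from the outset, and when $M=\bbbb^n$ the map $q$ is an orientation-preserving diffeomorphism, so $J_G=J_p\cdot J_F\cdot J_q>0$ a.e.\ with no interpolation needed. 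If you want to salvage your approach, the natural fix is to imitate this: rather than disjoint bubbles separated by constant regions, use nested annuli on which the map alternates between an orientation-preserving stretch and the identity (in a chart), so that positivity of the Jacobian is automatic on the complement of the bubbles.
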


First, let us explain the difference between the dimension 4 and all other dimensions (recall that, throughout the paper, $n\geq 2$). In dimensions $n=2,3,5,6$, if a smooth $n$-manifold $\tilde{N}$ is homeomorphic to $\Sph^n$, then it is diffeomorphic to $\Sph^n$. When $n=2$ it follows from classification of surfaces. When $n=3$, it follows from
Hauptvermutung for $3$-manifolds \cite[Theorems~3~and~4]{moise} and from the fact that a combinatorial $3$-manifold has a unique smoothing \cite[Theorem~4.2]{hirschm}. For the cases $n=5,6$, see \cite[Remark~on~p.~505]{kervairem}.
In dimensions $n\geq 7$ there exist {\em exotic spheres}: smooth manifolds $\tilde{N}$ homeomorphic, but not diffeomorphic to $\Sph^n$, see \cite[Remark~on~p.~505]{kervairem}. The first example of an exotic sphere was discovered by Milnor \cite{milnor} in dimension $n=7$.

Assume $\tilde{N}$ is an exotic sphere and $n=\dim \tilde{N}\geq 7$. Even though there is no diffeomorphism between $\Sph^n$ and $\tilde{N}$, one can find a bi-Lipschitz homeomorphism between these spaces, see \cite[Theorem 2]{Sullivan} and \cite[Corollary 4.5]{VaisalaTukia}. We can even assume it is a diffeomorphism outside a single point, see \cite[Corollary 1.15]{KondoTanaka}.

The situation in dimension $4$ is more complicated: it is not known whether there exist any exotic sphere of that dimension, or, if there exists one, whether it is
bi-Lipschitz-homeomorphic to the standard $\Sph^4$. Therefore we need the assumption that if $n=4$,  $\tilde{N}$ is diffeomorphic to $\Sph^4$, as a safeguard
against the possible existence of exotic spheres in dimension 4. We could weaken it, asking that $\tilde{N}$ is bi-Lipschitz equivalent to $\Sph^4$.

The main ingredient of the proof of Proposition \ref{prop disc} is an example of a discontinuous mapping $F\in W^{1,P}(\Sph^n,\Sph^n)$ of positive Jacobian a.e. This example has appeared first in \cite[Section 3.4]{HIMO}, and the details of the construction are carefully presented there. In \cite[Section 4]{GHP} a slightly simplified construction in the case $P(t)=t^n/\log(e+t)$ is given. We refer the reader to these two sources, giving here only a sketch of the construction.
\begin{example}\label{example}
Let
$
S^\beta_\alpha=\{(z\sin\theta,\cos \theta)~~:~~z\in\bbbs^{n-1},\,\alpha\leq\theta\leq\beta\}\subset\bbbs^n
$
be the spherical slice bounded by latitude spheres $\theta=\alpha$ and $\theta=\beta$ (with $\theta=0$ denoting the north and $\theta=\pi$
the south pole of $\bbbs^n$).

We choose a sequence of latitude angles $\theta_i$, $i=1,2,\ldots$ such that
$$
\theta_0 =\pi>\frac{\pi}{2}\geq 2\theta_1>\theta_1\geq 2\theta_2>\theta_2\geq 2\theta_3>\cdots>0.
$$
The mapping $F:\Sph^n\to\Sph^n$  is identity on the spherical slices $S_{2\theta_{k+1}}^{\theta_k}$ and on the southern cap $S^{\pi}_{2\theta_1}$. The slice  $S^{2\theta_k}_{\theta_k}$, for $k=1,2,\ldots$,  is stretched (linearly in the latitude angle) in such a way that it covers the whole sphere twice, keeping the latitude circle $\theta=\theta_k$ fixed, mapping $\theta=3\pi\theta_k/(2\pi+\theta_k)$ to the south pole, $\theta=4\pi\theta_k/(2\pi+\theta_k)$ to the north pole and finally the latitude circle $\theta=2\theta_k$ back to itself (Figure \ref{fig:1}). We can do it in an orientation-preserving way, so the Jacobian determinant of $F$ is positive inside the slice.
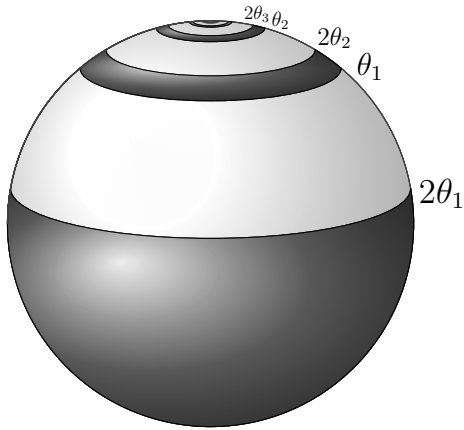
\begin{figure}
\begin{minipage}{0.3\textwidth}
\begin{tikzpicture}[scale=0.9]

\filldraw[shading=ball, ball color=white,opacity=0.8] (3,0) circle (3);
\filldraw[fill=white,opacity=0.5] (3,0) circle (3);
\draw[color=darkgray] (3,0) circle (3);
\draw[color=darkgray] (0,0) arc [radius=3, start angle=180, end angle=170,opacity=0] arc [x radius={3*sin(80)}, y radius={0.75*sin(80)}, start angle=-180, end angle=0];
\draw[color=darkgray] (3,0) circle (3);
\draw[color=darkgray] (0,0) arc [radius=3, start angle=180, end angle=170,opacity=0] arc [x radius={3*sin(80)}, y radius={0.75*sin(80)}, start angle=-180, end angle=0];

\filldraw[shading=ball,ball color=gray] ({3+3*cos(10)},{3*sin(10)})  arc [x radius={3*sin(80)}, y radius={0.75*sin(80)}, start angle=0, end angle=-180]  arc [radius=3, start angle=170, end angle=370];


\draw[color=darkgray] (0,0) arc [radius=3, start angle=180, end angle=130,opacity=0] arc [x radius={3*sin(40)}, y radius={0.75*sin(40)}, start angle=-180, end angle=0];
\draw[color=darkgray] (0,0) arc [radius=3, start angle=180, end angle=121,opacity=0] arc [x radius={3*sin(31)}, y radius={0.75*sin(31)}, start angle=-180, end angle=0];

\filldraw[shading=ball,ball color=gray] ({3+3*cos(130)},{3*sin(130)}) arc [x radius={3*sin(40)}, y radius={0.75*sin(40)}, start angle=-180, end angle=0] arc [radius=3, start angle=50, end angle=59] arc [x radius={3*sin(31)}, y radius={0.75*sin(31)}, start angle=0, end angle=-180] arc [radius=3, start angle=121, end angle=130];

\draw[color=darkgray] (0,0) arc [radius=3, start angle=180, end angle=105.5,opacity=0] arc [x radius={3*sin(15.5)}, y radius={0.75*sin(15.5)}, start angle=-180, end angle=0];
\draw[color=darkgray] (0,0) arc [radius=3, start angle=180, end angle=102,opacity=0] arc [x radius={3*sin(12)}, y radius={0.75*sin(12)}, start angle=-180, end angle=0];

\filldraw[shading=ball,ball color=gray] ({3+3*cos(105.5)},{3*sin(105.5)}) arc [x radius={3*sin(15.5)}, y radius={0.75*sin(15.5)}, start angle=-180, end angle=0] arc [radius=3, start angle=74.5, end angle=78] arc [x radius={3*sin(12)}, y radius={0.75*sin(12)}, start angle=0, end angle=-180] arc [radius=3, start angle=102, end angle=105];

\draw[color=darkgray] (0,0) arc [radius=3, start angle=180, end angle=96,opacity=0] arc [x radius={3*sin(6)}, y radius={0.75*sin(6)}, start angle=-180, end angle=0];
\draw[color=darkgray] (0,0) arc [radius=3, start angle=180, end angle=95,opacity=0] arc [x radius={3*sin(5)}, y radius={0.75*sin(5)}, start angle=-180, end angle=0];

\filldraw[fill=gray] ({3+3*cos(96)},{3*sin(96)}) arc [x radius={3*sin(6)}, y radius={0.75*sin(6)}, start angle=-180, end angle=0] arc [radius=3, start angle=84, end angle=85] arc [x radius={3*sin(5)}, y radius={0.75*sin(5)}, start angle=0, end angle=-180] arc [radius=3, start angle=95, end angle=96];

\draw[color=darkgray] (0,0) arc [radius=3, start angle=180, end angle=92.5,opacity=0] arc [x radius={3*sin(2.5)}, y radius={0.75*sin(2.5)}, start angle=-180, end angle=0];
\draw[color=darkgray] (0,0) arc [radius=3, start angle=180, end angle=92.1,opacity=0] arc [x radius={3*sin(2.1)}, y radius={0.75*sin(2.1)}, start angle=-180, end angle=0];
\draw[color=darkgray] (0,0) arc [radius=3, start angle=180, end angle=91.7,opacity=0] arc [x radius={3*sin(1.7)}, y radius={0.75*sin(1.7)}, start angle=-180, end angle=0];
\draw[color=darkgray] (0,0) arc [radius=3, start angle=180, end angle=91.4,opacity=0] arc [x radius={3*sin(1.4)}, y radius={0.75*sin(1.4)}, start angle=-180, end angle=0];

\node[right, scale=1.1] at (5.9,0.45) {$2\theta_1$};
\node[right] at (5,2.32) {$\theta_1$};
\node[right,scale=0.8] at (4.45,2.75) {$2\theta_2$};
\node[right,scale=0.65] at (3.8,3) {$\theta_2$};
\node[right,scale=0.6] at (3.4,3.1) {$2\theta_3$};
\end{tikzpicture}
\end{minipage}
\hfill
\begin{minipage}{0.65\textwidth}
\caption{Example \ref{example}: Dark shaded bands are preserved by $F$, while light shaded ones are stretched onto the whole sphere so that they cover the whole sphere twice, in an orientation preserving way.}
\end{minipage}
\label{fig:1}
\end{figure}

Obviously, $F$ is discontinuous in the north pole $\mathfrak{n}$
and, since the image of each slice  $S^{2\theta_k}_{\theta_k}$ has measure $2|\Sph^n|$, the Jacobian of $F$ is not integrable.

As shown in \cite{HIMO}, if the Young function $P$ satisfies the conditions \eqref{doubling cond}, \eqref{growth 1a}, \eqref{growth 2} and \eqref{div condition}, one can choose the sequence $\theta_i$ in such a way that the mapping $F$ belongs to the Orlicz-Sobolev class $W^{1,P}(\Sph^n,\Sph^n)$.
\end{example}

\begin{proof}[Proof of Proposition \ref{prop disc}]
Let $p:\Sph^n \to N$ denote a covering map. This is an orientation preserving (and thus of positive Jacobian) local diffeomorphism, if the universal cover $\tilde{N}$ is diffeomorphic to $\Sph^n$, and a bi-Lipschitz, orientation preserving local homeomorphism, if $\tilde{N}$ is an exotic sphere.

Next, let $\bbbb$ be a Riemannian ball in $M$, diffeomorphic to a Euclidean ball.
There is a Lipschitz map $q:M\to \Sph^n$ which maps $M\setminus\bbbb$ to the south pole $\mathfrak{s}$ of $\Sph^n$ and which is an orientation preserving diffeomorphism between  $\bbbb$ and $\Sph^n\setminus \{\mathfrak{s}\}$; thus $q$ has finite distortion. If $M=\bbbb$, then $q$ is smooth and has everywhere positive Jacobian.
The mapping $G$ is given as the composition
$$
\begin{tikzcd}
M \arrow[rrr,bend left=25,"G"] \arrow[r,"~~~~q"]&\Sph^n\arrow[r,"F"]&\Sph^n\arrow[r,"\!\!\!\!p"]&N.
\end{tikzcd}
$$
Then, if $\mathfrak{n}$ denotes the north pole of $\Sph^n$, $G$ is discontinuous at $x=q^{-1}(\mathfrak{n})$: any neighborhood of $x$ is mapped by $G$ onto the whole $N$. At the same time, $p$ and $q$ are Lipschitz and $F$ is in $W^{1,P}(\Sph^n,\Sph^n)$, thus the composition lies in $W^{1,P}(M,N)$.
\end{proof}

\section{Proof of Theorem~\ref{T5}}
\label{Proof}

Let us choose a Riemannian ball $\bbbb^n\subset M$, diffeomorphic to an $n$-dimensional Euclidean ball and such that $f|_{\partial\bbbb^n}\in W^{1,\alpha}(\partial\bbbb^n)$ and that the image $f(\partial\bbbb^n)$ has small diameter in~$N$. Recall that for any $x\in M$ there exist arbitrarily small balls centered at $x$ with the above property (this follows immediately from Remark~\ref{rem morrey} and Lemma~\ref{lem:Eucl} d)).

By Lemma~\ref{traces} there is an extension $\tilde{g}\in W^{1,n}(\bbbb^n,\R^k)$ of $f|_{\partial\bbbb^n}$.
The extension $\tilde{g}$ can be defined by an explicit integral formula \cite[Theorem~15.21]{leoni}, so that at any point
of $\bbbb^n$, the mapping $\tilde{g}$ is an integral average of values of $f|_{\partial\bbbb^n}$. Hence the image of $\tilde{g}$ is contained in the convex hull of $f(\partial\bbbb^n)$. Since we can assume that the diameter of
$f(\partial\bbbb^n)$ is sufficiently small, we can assume that values of $\tilde{g}$ belong to a tubular neighborhood
$U$ of $N$, from which there is a smooth Lipschitz retraction $\pi:U\to N$. Taking
$g=\pi\circ\tilde{g}\in W^{1,n}(\bbbb^n,N)$ we obtain an extension of $f|_{\partial\bbbb^n}$ with values in $N$.
Clearly, the Jacobian $J_g$ is integrable in~$\bbbb^n$.

Let us thus consider a mapping $F$ from the $n$-dimensional sphere $\bbbs^n$ to $N$, which on the upper hemisphere $\bbbs^n_+$, identified with $\bbbb^n$, equals $f$, and on the lower hemisphere $\bbbs^n_-$, again identified with $\bbbb^n$, equals $g$, see Figure \ref{fig:2}. As $f\in W^{1,P}(\bbbb^n,N)$ and $g\in W^{1,n}(\bbbb^n,N)\subset W^{1,P}(\bbbb^n,N)$, we have that $F\in W^{1,P}(\bbbs^n,N)$.

Next, we apply Lemma~\ref{lemma approx} to the mapping $F$, obtaining a sequence $t_i\to\infty$ and
$Ct_i$\nobreak\mbox{-}\nobreak{}Lipschitz continuous mappings $F_i:\bbbs^n\to N$, $i=1,2,\ldots$ such that
\begin{equation}
\label{eq: 1 for F}
t_i^n |\{\MM |DF|>t_i\}|\to 0\quad\text{ as }i\to\infty,
\end{equation}
and $F_i$ coincides with $F$ on the set $\{\MM|DF|\leq t_i\}$.

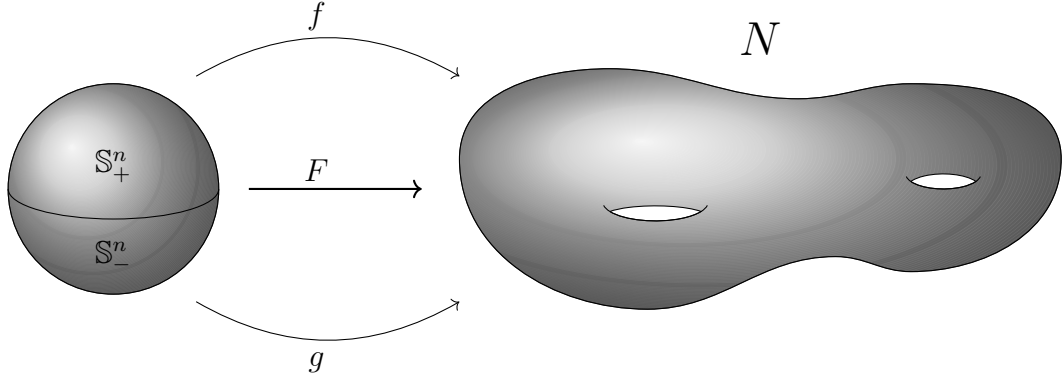
\begin{figure}
\begin{tikzpicture}
\shadedraw[shading=ball, ball color=lightgray,opacity=0.6] (1.4,0) circle (1.4);
\draw (1.4,0) circle (1.4);
\draw (0,0) arc [x radius=1.4, y radius =0.4, start angle=-180, end angle=0];

\begin{scope}[shift={(1,0.4)}]

\shadedraw[shading=ball, ball color=lightgray,opacity=0.6] (5,0) to [out=-90, in=180] (7.5,-2) to [out=0, in=180] (10,-1.3) to [out=0,in=180] (11,-1.5) to [out=0,in=-90] (13,0) to [out=90,in=0] (11,1) to [out=180,in=0] (9.5,0.8) to [out=180, in=0] (7,1.2) to [out=180,in=90] (5,0);

\draw  (5,0) to [out=-90, in=180] (7.5,-2) to [out=0, in=180] (10,-1.3) to [out=0,in=180] (11,-1.5) to [out=0,in=-90] (13,0) to [out=90,in=0] (11,1) to [out=180,in=0] (9.5,0.8) to [out=180, in=0] (7,1.2) to [out=180,in=90] (5,0);

\filldraw[fill=white] (7,-0.7) arc [x radius=0.7, y radius=0.25, start angle=210, end angle=330] arc [x radius=0.7, y radius=0.15, start angle=30, end angle=150];
\draw (7,-0.7) arc [x radius=0.7, y radius=0.25, start angle=210, end angle=330, opacity=0] arc [x radius=0.7, y radius=0.25, start angle=330, end angle=350];
\draw (7,-0.7) arc [x radius=0.7, y radius=0.25, start angle=210, end angle=190];

\filldraw[fill=white] (11,-0.3) arc [x radius=0.5, y radius=0.2, start angle=210, end angle=330] arc [x radius=0.5, y radius=0.2, start angle=30, end angle=150];
\draw (11,-0.3) arc [x radius=0.5, y radius=0.2, start angle=210, end angle=330, opacity=0] arc [x radius=0.5, y radius=0.2, start angle=330, end angle=350];
\draw (11,-0.3) arc [x radius=0.5, y radius=0.2, start angle=210, end angle=190];

\end{scope}

\draw[->] (2.5,1.5) to [out=30, in=150] (6,1.5);
\draw[->] (2.5,-1.5) to [out=-30, in=-150] (6,-1.5);
\draw[thick,->] (3.2,0) -- (5.5,0);

\node[scale=1.5] at (10,2) {$N$};
\node at (1.4,0.3) {$\bbbs^n_+$};
\node at (1.4,-0.9) {$\bbbs^n_-$};
\node at (4.1,2.3) {$f$};
\node at (4.1,-2.3) {$g$};
\node at (4.1,0.25) {$F$};
\end{tikzpicture}
\caption{Construction of the mapping $F$.}
\label{fig:2}
\end{figure}

Since, by assumption, the universal cover of the target manifold $N$ is not a rational homo\-logy sphere, by Lemma \ref{lem: RHS} for $i=1,2\ldots$ the mapping $F_i$ has degree zero, thus
$$
\int_{\bbbs^n} J_{F_i}=0.
$$

Therefore
\begin{equation*}
\begin{split}
\int_{\bbbs^n_+} J_{F_i}&=-\int_{\bbbs^n_-} J_{F_i}
=-\int_{\bbbs^n_-\cap \{\MM |DF|\leq t_i\}} J_{g}-\int_{\bbbs^n_-\cap \{\MM |DF|> t_i\}} J_{F_i}\\
&=-\int_{\bbbs^n_-} J_{g}+\int_{\bbbs^n_-\cap \{\MM |DF|> t_i\}} J_{g}-\int_{\bbbs^n_-\cap \{\MM |DF|> t_i\}} J_{F_i}\\
&=\left(-\int_{\bbbs^n_-} J_{g}\right)+I_i-K_i.
\end{split}
\end{equation*}
It follows from \eqref{eq: 1 for F} that
the measure of the set of integration in $I_i$ tends to 0 when $i\to\infty$. This, together with the fact that $J_g$ is an integrable function, shows that $I_i\to 0$ with $i\to\infty$.

Condition \eqref{eq: 1 for F} allows us to prove the same for $K_i$. Namely, since $F_i$ is Lipschitz with the Lipschitz constant $Ct_i$,
we have $|J_{F_i}|\leq Ct_i^n$ and
$$
|K_i|\leq C t_i^n |\bbbs^n_-\cap \{\MM |DF|> t_i\}|\leq C t_i^n |\{\MM |DF|> t_i\}|\xrightarrow{i\to\infty} 0.
$$

Hence
\begin{equation}
\label{lala}
\int_{\bbbs^n_+} J_{F_i}\xrightarrow{i\to\infty} -\int_{\bbbs^n_-} J_{g}<\infty.
\end{equation}
However,
\begin{equation}
\label{baba}
\int_{\bbbs^n_+} J_{F_i}=\int_{\bbbs^n_+\cap \{\MM |DF|\leq t_i\}} J_f+\int_{\bbbs^n_+\cap \{\MM |DF_i|>t_i\}} J_{F_i}=I'_i+K'_i\to \int_{\Sph^n_+} J_f.
\end{equation}
Indeed, $J_f\geq 0$, and the measure of the complement of the set of integration in $I'_i$ tends to zero,
$|\bbbs^n_+\cap \{\MM |DF_i|>t_i\}|\xrightarrow{i\to \infty}0$,
so the Monotone Convergence Theorem yields
$I'_i\to \int_{\bbbs^n_+} J_f$ as $i\to\infty$. Also, by the same argument as when estimating $K_i$,
$$
|K'_i|\leq Ct_i^n |\{\MM |DF|>t_i\}|\xrightarrow{i\to\infty}0.
$$
Comparing \eqref{lala} and \eqref{baba} we obtain
$$
\int_{\bbbb^n} J_f=-\int_{\bbbs^n_-} J_{g}<\infty
$$
which proves integrability of $J_f$ in $\bbbb^n$.
\hfill $\Box$

\section{Proof of Theorem~\ref{T4}}
\label{Michal}

The key step  of the proof
is the following lemma, which is essentially \cite[Proposition 7]{GHP}, under weaker assumptions.
\begin{lemma}
\label{thm main}
Assume that $M$ and $N$ are smooth, oriented, $n$-dimensional Riemannian manifolds
without boundary and assume additionally that $N$ is compact.
If $P$ satisfies conditions
\eqref{doubling cond}, \eqref{growth 1b}, \eqref{growth 2}, \eqref{div condition} and
$f\in W^{1,P}(M,N)$ has finite distortion and locally integrable Jacobian, $J_f\in L^1_{\rm loc}$, then for every point $x\in N$ and every $\eps>0$ there is $r>0$ such that
$\diam f(\bbbb^n(x,r))<\eps$.
\end{lemma}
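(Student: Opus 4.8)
The plan is to show, equivalently, that the representative $f(y):=\limsup_{t\to 0}\frac{1}{|\bbbb^n(y,t)|}\int_{\bbbb^n(y,t)}f$ has arbitrarily small oscillation on small balls about any point $x\in M$, by localising to a Euclidean coordinate ball and running a degree-theoretic argument in which the hypothesis $J_f\in L^1_{\rm loc}$ plays the role that $L^n$-integrability of $Df$ plays in the classical argument for $W^{1,n}$ mappings of finite distortion. Fix $x\in M$; using a chart we may assume a neighbourhood of $x$ is a Euclidean ball $\bbbb^n(x,R)$. Given $\eps>0$ choose $\delta>0$ much smaller than $\eps$ and than the injectivity radius of $N$, so that every geodesic ball $B(p,2\delta)\subset N$ is geodesically convex and any subset of $N$ of diameter $\le 4\delta$ has diameter $<\eps$ as a subset of $\R^k$. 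Since $f$ has finite distortion, $J_f\ge 0$ a.e., and since $J_f\in L^1_{\rm loc}$ we have $\int_{\bbbb^n(x,\rho)}J_f\to 0$ as $\rho\to 0$; fix $\rho_0$ with $\int_{\bbbb^n(x,\rho_0)}J_f<\gamma$, $\gamma>0$ to be specified. By Lemma~\ref{lem:Eucl}~d) and Remark~\ref{rem morrey} we may pick $r\in(0,\rho_0)$ for which $f|_{\Sph^{n-1}(x,r)}$ has a continuous representative lying in $W^{1,\alpha}(\Sph^{n-1}(x,r))$ with $\alpha>n-1$ and $\osc_{\Sph^{n-1}(x,r)}f<\delta$; then $\Gamma:=f(\Sph^{n-1}(x,r))$ is connected of diameter $<\delta$, so $\Gamma\subset B(p,\delta)$ for some $p\in N$, and $\int_{\bbbb^n(x,r)}J_f<\gamma$.

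Next I would compare $f|_{\bbbb^n(x,r)}$ with its boundary data. Exactly as in the proof of Theorem~\ref{T5}, Lemma~\ref{traces}, the explicit extension of Remark~\ref{szesnascie}, and the nearest point projection $\pi$ produce an extension $h\in W^{1,n}(\bbbb^n(x,r),N)$ of $f|_{\Sph^{n-1}(x,r)}$, continuous up to the boundary, with image contained in $B(p,2\delta)$. I would then invoke the degree theory for Orlicz--Sobolev mappings of finite distortion with integrable Jacobian: for a.e. $q\in N\setminus\Gamma$ the local degree $\deg(f,\bbbb^n(x,r),q)$ is well defined and depends only on the continuous trace $f|_{\Sph^{n-1}(x,r)}$, the area formula $\int_{\bbbb^n(x,r)}J_f=\int_N\deg(f,\bbbb^n(x,r),q)\,dq$ holds, and $f$, being a map of finite distortion, is \emph{sense-preserving}, so $\deg(f,\bbbb^n(x,r),q)\ge 0$ and equals the multiplicity $N(f,\bbbb^n(x,r),q)$ for a.e. $q$. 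For $q\notin\overline{B(p,2\delta)}$ the trace $f|_{\Sph^{n-1}(x,r)}=h|_{\Sph^{n-1}(x,r)}$ is null-homotopic in $N\setminus\{q\}$, since it extends over the disk as $h$, whose image avoids $q$; hence $\deg(f,\bbbb^n(x,r),q)=0$, so $N(f,\bbbb^n(x,r),q)=0$ for a.e. such $q$, i.e. the essential image of $f|_{\bbbb^n(x,r)}$ lies in $\overline{B(p,2\delta)}$ up to a null set. The rigorous justification of these degree statements in the Orlicz--Sobolev category I would obtain from the Lipschitz maps $f_i$ of Lemma~\ref{lemma approx} — which coincide with $f$ off a set of measure $o(t_i^{-n})$ and satisfy $|J_{f_i}|\le Ct_i^n$ — via the same limiting computation as in the proof of Theorem~\ref{T5}, the error terms being controlled by $t_i^n|\{\MM|Df|>t_i\}|\to 0$.

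Finally I would upgrade the $L^1$ conclusion to a pointwise one. For $y\in\bbbb^n(x,r/2)$ and small $t$ the average $\frac{1}{|\bbbb^n(y,t)|}\int_{\bbbb^n(y,t)}f$ lies in the closed convex hull of the essential image of $f$ on $\bbbb^n(x,r)$, hence in the convex hull of $\overline{B(p,2\delta)}$, a set of diameter $\le 4\delta$ in $\R^k$. Thus $\diam f(\bbbb^n(x,r/2))\le 4\delta<\eps$, which is the assertion with $r/2$ in place of $r$. I expect the main obstacle to be the middle paragraph: one must set up a usable degree theory for $W^{1,P}$ mappings of finite distortion whose Jacobian is merely locally integrable — well-definedness of $\deg(f,\bbbb^n(x,r),\cdot)$ from the continuous boundary trace, the area formula, and the sense-preserving identity $N(f,\cdot,q)=\deg(f,\cdot,q)$ a.e. — which is exactly where the hypothesis $J_f\in L^1_{\rm loc}$ enters and where the argument both relies on and weakens the hypotheses of \cite[Proposition~7]{GHP}. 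The remaining ingredients are soft: absolute continuity of $\int J_f$, the oscillation estimate of Lemma~\ref{lem:Eucl}~d), the trace/extension Lemma~\ref{traces}, and the Lipschitz approximation of Lemma~\ref{lemma approx}.
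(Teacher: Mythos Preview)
Your skeleton matches the paper's: localize via Lemma~\ref{lem:Eucl}~d), extend the boundary trace with Lemma~\ref{traces} and the nearest point projection, and pass to the Lipschitz maps of Lemma~\ref{lemma approx}. The gap is precisely where you flag it. You invoke a local degree $\deg(f,\bbbb^n(x,r),q)$ for the non-continuous map $f$ together with the identification $N(f,\cdot,q)=\deg(f,\cdot,q)$ a.e.; the latter ``sense-preserving'' statement is part of the continuity package you are trying to prove, so the argument is circular as written. Your plan to rescue it via the Lipschitz $f_i$ and ``the same limiting computation as in the proof of Theorem~\ref{T5}'' does not transfer, for two reasons. First, the $f_i$ of Lemma~\ref{lemma approx} agree with $f$ only on $\{\MM|Df|\le t_i\}$, which need not contain $\partial\bbbb^n(x,r)$; you therefore cannot read off $\deg(f_i,\bbbb^n(x,r),q)=0$ from the trace of $f$ and the extension $h$. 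Second, in Theorem~\ref{T5} the approximating sphere-maps have degree zero \emph{because of the topological hypothesis on $N$}; here there is no such hypothesis, so you need a different mechanism for degree zero.

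The paper supplies exactly these two missing ideas. It extends $f$ not inward but \emph{outward} to a thin annulus $\bbbb^n_{1+2\delta}\setminus\bbbb^n$ via a $W^{1,n}$ map $\tilde f$ that is smooth there; since $\MM|Dg|$ is then bounded on the intermediate sphere $\partial\bbbb^n_{1+\delta}$, the Lipschitz approximations $g_i$ eventually coincide with $g$ on that sphere, restoring control of the boundary data at the level of the Lipschitz maps. For degree zero, the hypothesis $J_f\in L^1_{\rm loc}$ is used quantitatively: choosing $r$ with $\int_{\bbbb^n}J_f<\inf_{\diam D<\eps}|N\setminus D|$ forces $|g_i(\overbar{\bbbb}^n_{1+\delta})|<|N\setminus D|$, so $g_i$ omits some ball $D'\subset N\setminus\overbar D$. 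A Lipschitz retraction $\pi:N\setminus\overbar{D}'\to\overbar D$ (Lemma~\ref{lem:retr}) then lets one glue $g_i$ and $\pi\circ g_i$ into a non-surjective, hence degree-zero, sphere map. The contradiction is reached by comparing $\int_{g_i^{-1}(N\setminus\overbar D)}J_{g_i}=0$ with the limit $\int_A J_f>0$, where $A=f^{-1}(N\setminus\overbar D)\cap\bbbb^n$; this strict positivity is where finite distortion enters (via the auxiliary function $\dist(f(\cdot),\overbar D)$), replacing your appeal to sense-preservation. In short: the annulus extension (to fix boundary traces for the approximations) and the omitted-ball/retraction trick (to manufacture degree zero without topological hypotheses) are the two concrete devices your sketch is missing.
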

\begin{remark}
To be more precise, we will show that almost all points of the ball $\bbbb^n(x,r)$ are mapped on a set of diameter less than $\eps$, but then we can find a representative of $f$ such that $\diam f(\bbbb^n(x,r))<\eps$.
\end{remark}

By Lemma~\ref{thm main}, we can find $r>0$ such that
both $\bbbb^n(x,r)$ and $f(\bbbb^n(x,r))$ lie then in local charts, so the problem is reduced to the Euclidean one and the result follows from Theorem~\ref{T2}.

In the proof of Lemma~\ref{thm main} we need a simple geometric lemma.
\begin{lemma}
\label{lem:retr}
Assume $N$ is a connected Riemannian manifold.  Let $D$, $D'$ be Riemannian balls in $N$
with disjoint closures and such that balls that are concentric with $D$ and $D'$ and with twice the radius are diffeomorphic to
$n$-dimensional Euclidean balls. Then there is a Lipschitz retraction $\pi:N\setminus \overbar{D}' \to \overbar{D}$.
\end{lemma}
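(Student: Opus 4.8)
I would write the retraction $\pi$ down by hand, pushing all of $N\setminus\overbar{D}'$ towards the centre of $D$. By the hypothesis on $D$ one may choose coordinates in which $2D$ is the Euclidean ball $\bbbb^n(0,2)$, $D$ is $\bbbb^n(0,1)$ and the centre $p$ of $D$ is the origin; in particular $\overbar{D}=\overbar{\bbbb^n(0,1)}$ is a smoothly embedded closed $n$-ball. Since $\overbar{D}$ and $\overbar{D}'$ are disjoint compact sets, one may fix $\tau\in(1,2]$ so small that the closed spherical shell $A:=\{x\in 2D:1\le|x|\le\tau\}$ is disjoint from $\overbar{D}'$; here $|\cdot|$ is the Euclidean norm of the chart, and I write $S_\tau=\{|x|=\tau\}$ for the outer boundary sphere of $A$.

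Then I would define $\pi\colon N\setminus\overbar{D}'\to\overbar{D}$ on the three closed pieces $\overbar{D}$, $A$ and $E:=(N\setminus\overbar{D}')\setminus\{x\in 2D:|x|<\tau\}$ by
\[
\pi=\mathrm{id}\ \text{ on }\overbar{D},\qquad
\pi(x)=\frac{\tau-|x|}{\tau-1}\cdot\frac{x}{|x|}\ \text{ for }x\in A,\qquad
\pi\equiv p\ \text{ on }E .
\]
These three sets cover $N\setminus\overbar{D}'$ (the open ball $\{x\in 2D:|x|<\tau\}$ is contained in $\overbar{D}\cup A$, and every other point of $N\setminus\overbar{D}'$ lies in $E$ by definition), they meet only along $\overbar{D}\cap A=\partial D=\{|x|=1\}$ and $A\cap E=S_\tau$, and on those overlaps the formulas agree — both give $x$ on $\partial D$ and both give $p$ on $S_\tau$. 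Hence $\pi$ is a well-defined continuous map; for $x\in A$ one has $|\pi(x)|=(\tau-|x|)/(\tau-1)\in[0,1]$, so $\pi$ takes values in $\overbar{D}$; and $\pi$ restricts to the identity on $\overbar{D}$, so it is a retraction of $N\setminus\overbar{D}'$ onto $\overbar{D}$.

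Finally I would check that $\pi$ is Lipschitz. It is Lipschitz on each of the three pieces — the identity on $\overbar{D}$, a map given by a formula that is smooth on the compact shell $A$ (smooth since $x\neq 0$ there), and a constant on $E$ — and at most two of the pieces meet near any point. Since $N$, and hence the open submanifold $N\setminus\overbar{D}'$, carries its intrinsic length metric, a continuous map that is Lipschitz, with one common constant, on each of finitely many closed sets covering such a space is Lipschitz on their union: join the two given points by an almost length-minimising path and break it at the parameters where it passes from one piece into another. This pasting is really the only step that needs a word of care; there is no serious obstacle, which matches the authors' description of the statement as a simple geometric lemma. The one genuine ingredient is that $\overbar{D}$ has to be a true embedded ball, so that the sphere $\partial D$ can be collapsed to $p$ through $\overbar{D}$ — precisely what the hypothesis on $2D$ provides; the parallel hypothesis on $2D'$ and the connectedness of $N$ are not actually used in this construction, though both are available anyway in the intended application.
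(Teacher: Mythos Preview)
Your construction is correct. The paper does not itself prove this lemma but simply refers to \cite[Proof of Proposition~7]{GHP} for the detailed construction, so there is no in-paper argument to compare against; your explicit radial collapse --- identity on $\overbar{D}$, the smooth shell map $x\mapsto\frac{\tau-|x|}{\tau-1}\,\frac{x}{|x|}$ on $A$, and the constant $p$ outside --- is precisely the natural construction and is in the same spirit as the cited reference. Your closing observations are also accurate: the hypothesis on $2D'$ and the connectedness of $N$ play no role in this particular construction (they matter elsewhere, e.g.\ to guarantee that $N\setminus\overbar{D}'$ remains connected when $n\geq 2$, but not for the retraction itself). One small remark on the pasting step: the phrase ``break it at the parameters where it passes from one piece into another'' is slightly informal, since a path can oscillate across $\partial D$ or $S_\tau$ infinitely often; the clean way to say it is that $\pi$ is locally Lipschitz with a uniform constant (near each interface one finds an intermediate point on the segment in the Euclidean chart), and on a length space this upgrades to a global Lipschitz bound by covering an almost-minimising curve with finitely many such neighbourhoods. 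This is exactly the ``word of care'' you anticipated, and it goes through without difficulty.
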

For a detailed construction of such a retraction see \cite[Proof of Proposition~7]{GHP}.

\begin{proof}[Proof of Lemma \ref{thm main}]
Let $\eps>0$ be so small that
$$
\delta=\inf_{\substack{D\subset N\\ \diam D<\eps}} |N\setminus D|>0,
$$
where the infimum is taken over all balls $D\subset N$ of diameter less than $\eps$.

By Lemma~\ref{lem:Eucl} d), for every $x\in M$ there is a sufficiently small $r>0$ such that
$\bbbb^n=\bbbb^n(x,r)$ satisfies
\begin{itemize}
\item
$f|_{\partial\bbbb^n}\in W^{1,\alpha}(\partial\bbbb^n)$,
\item
$f(\partial\bbbb^n)$ is contained in a ball $D\subset N$ of diameter less than $\eps$,
\item
$\int_{\bbbb^n} J_f<\delta$.
\end{itemize}

To complete the proof it suffices to show that almost all points of $\bbbb^n$ are mapped into $\overbar{D}$, i.e.
$$
|A|=0,
\quad
\text{where}
\quad
A=f^{-1}(N\setminus \overbar{D})\cap\bbbb^n.
$$
Suppose to the contrary that $|A|>0$. We claim that
\begin{equation}
\label{jac pos}
\int_A J_f>0.
\end{equation}
Consider the function $h:\bbbb^n\to \bbbr$ defined by $h(x)=\dist(f(x),\overbar{D})$.
Since the distance function $\dist(\cdot,\overbar{D})$ is $1$-Lipschitz, $h\in W^{1,P}(\bbbb^n)\subset W^{1,1}(\bbbb^n)$
and hence $h\in W^{1,1}_0(\bbbb^n)$, because the trace of $h$ on the boundary $\partial\bbbb^n$ equals zero.
Clearly $Dh=0$ on $\bbbb^n\setminus A$, because $h=0$ on that set. On the other hand $h$ is not constant since
$h>0$ on the set $A$ of positive measure so the derivative $Dh$ cannot be equal zero a.e.
in $\bbbb^n$ (as otherwise we would have $h=0$ in $\bbbb^n$) so $Dh$ must be non-zero
on a subset of $A$ of positive measure. Since the mapping $f$ has finite distortion, \eqref{jac pos} follows.

By $\bbbb^n_\sigma$, for $\sigma>0$, we shall denote a ball concentric with $\bbbb^n$, but of radius $\sigma$ times that of $\bbbb^n$.

Using Lemma~\ref{traces}
we extend the H\"older continuous (by Lemma~\ref{morrey})
function $f|_{\partial\bbbb^n}$ to a $W^{1,n}$ function $\tf$ on an annulus
$\bbbb^n_{1+2\delta}\setminus\bbbb^n$, for some small $\delta>0$. We can choose the extension $\tf$ to be smooth in the annulus and continuous up to the boundary
(see Remark~\ref{szesnascie}). Hence if $\delta$ is sufficiently small, values of $\tf$ on the annulus $\bbbb^n_{1+2\delta}\setminus\bbbb^n$
belong to a tubular neighborhood of $N$ and composition with the nearest point projection shows that we can assume that the values of the extension $\tf$ belong to $N$.
Then, if $\delta$ is small enough, continuity of $\tf$ shows that
$\tf( \bbbb^n_{1+2\delta}\setminus \bbbb^n)\subset D$.

Let
$$
g=\begin{cases}
\tf &\text{on }\bbbb^n_{1+2\delta}\setminus \bbbb^n,\\
f&\text{on }\bbbb^n.
\end{cases}
$$
Recall that
$$
\int_{\bbbb^n} J_f<\delta\leq |N\setminus D|.
$$
Since the extension $\tilde{f}$ belongs to $W^{1,n}$ and $W^{1,n}$ mappings have integrable Jacobians, we can take $\delta>0$ so small that
\begin{equation}
\label{ucho}
\int_{\bbbb^n_{1+2\delta}\setminus \bbbb^n} |J_g|<\int_A J_f
\quad
\text{and}
\quad
\int_{\bbbb^n_{1+2\delta}}|J_g|<|N\setminus D|.
\end{equation}
Note that $g\in W^{1,P}(\bbbb^n_{1+2\delta},N)$.
Applying Lemma~\ref{lemma approx} to $g$ defined on the manifold (with boundary) $\bbbb^n_{1+2\delta}$
we obtain a sequence $t_i\to\infty$ and $Ct_i$-Lipschitz mappings $g_i$ such that $g_i$ coincides with $g$ on $\bbbb^n_{1+2\delta}\cap \{\MM|Dg|\leq t_i\}$.

The function $x\mapsto \MM |Dg|(x)$ is bounded on $\partial \bbbb^n_{1+\delta}$: since $g$ is smooth in the annulus
$\bbbb^n_{1+2\delta}\setminus \bbbb^n$, averages of $|Dg|$ on small balls centered at $\partial \bbbb^n_{1+\delta}$ are uniformly bounded while
averages on larger balls are bounded by $C\int_{\bbbb^n_{1+2\delta}}|Dg|$.
Thus we can choose $i$ large enough to have
$\partial \bbbb^n_{1+\delta}\subset \{\MM |Dg|\leq t_i\}$. Then
$g_i=g$ on $\partial \bbbb^n_{1+\delta}$, in particular
$g_i(\partial \bbbb^n_{1+\delta})=\tf(\partial \bbbb^n_{1+\delta})\subset D$.

Next, we see that, by Lemma~\ref{lemma approx},
\begin{equation*}
|g_i(\overbar{\bbbb}^n_{1+\delta})|\leq \int_{\bbbb^n_{1+\delta}}|J_{g_i}|=
\int_{\bbbb^n_{1+\delta}}|J_{g}|+\int_{\bbbb^n_{1+\delta}\cap \{\MM |Dg|>t_i\}} |J_{g_i}|-|J_{g}|\xrightarrow{i\to\infty}
\int_{\bbbb^n_{1+\delta}}|J_g|<|N\setminus D|,
\end{equation*}
since $|J_g|$ is integrable and
$$
\Big|\int_{\bbbb^n_{1+\delta}\cap \{\MM |Dg|>t_i\}} |J_{g_i}|-|J_{g}|\Big|
\leq Ct_i^n |\{\MM |Dg|>t_i\}|\,\,+\int_{\bbbb^n_{1+\delta}\cap \{\MM |Dg|>t_i\}} |J_{g}|\xrightarrow{i\to\infty}0.
$$
Therefore,
for $i$ sufficiently large we have
$|g_i(\overbar{\bbbb}^n_{1+\delta})|<|N\setminus D|$, and since the set $g_i(\overbar{\bbbb}^n_{1+\delta})$ is closed in $N$, there exists a
Riemannian ball $D'\subset N\setminus\overbar{D}$ such that $D'\cap g_i(\overbar{\bbbb}^n_{1+\delta})=\varnothing$.

Let $\pi:N\setminus \overbar{D}'\to \overbar{D}$ be the Lipschitz retraction given by Lemma \ref{lem:retr}.
Then $g_i|_{\partial \bbbb^n_{1+\delta}}=\pi\circ g_i|_{\partial \bbbb^n_{1+\delta}}$, because $g_i(\partial \bbbb^n_{1+\delta})\subset D$.
Now, we repeat the construction as in Figure \ref{fig:2} and in the proof of Theorem~\ref{T5}:
we construct a Lipschitz mapping $G_i$ from a sphere into $N$, that equals $g_i|_{\overbar{\bbbb}^n_{1+\delta}}$
on the northern hemisphere and $\pi\circ g_i|_{\overbar{\bbbb}^n_{1+\delta}}$ on the southern hemisphere.
Since the mapping $G_i:\Sph^n\to N$ is not surjective (it omits the ball $D'$ in $N$), its degree equals zero, thus
\begin{equation}
\int_{\bbbb^n_{1+\delta}}J_{g_i}=\int_{\bbbb^n_{1+\delta}}J_{\pi\circ g_i}.
\end{equation}

On the other hand,
since $\pi\circ g_i$ maps $\bbbb^n_{1+\delta}\cap g_i^{-1}(N\setminus \overbar{D})$ onto the boundary of $\overbar{D}$ that has dimension $n-1$,
the Jacobian $J_{\pi\circ g_i}$
equals zero on that set, so
$$
J_{\pi\circ g_i}=\begin{cases} J_{g_i}&\text{ in } \bbbb^n_{1+\delta}\cap g_i^{-1}(\overbar{D}),\\
0 &\text{ in }\bbbb^n_{1+\delta}\cap g_i^{-1}(N\setminus \overbar{D}).
\end{cases}
$$
Thus
\begin{equation}
\label{eq:contr}
\int_{\bbbb^n_{1+\delta}\cap g_i^{-1}(N\setminus \overbar{D})}J_{g_i}=0.
\end{equation}
However,
\begin{equation*}
\begin{split}
\int_{\bbbb^n_{1+\delta}\cap g_i^{-1}(N\setminus \overbar{D})}J_{g_i}
&=\int_{\bbbb^n_{1+\delta}\cap g_i^{-1}(N\setminus \overbar{D})\cap \{\MM|Dg|>t_i\}}J_{g_i}+
\int_{\bbbb^n_{1+\delta}\cap g^{-1}(N\setminus \overbar{D})\cap \{\MM|Dg|\leq t_i\}}J_{g}\\
&=S_i+T_i.
\end{split}
\end{equation*}
We can estimate $|S_i|$ by $Ct_i^n |\{\MM|Dg|>t_i\}|$, thus $S_i\to 0$ as $i\to\infty$. The estimate for $T_i$ is more difficult.
We have
\begin{equation*}
\begin{split}
T_i&=
\int_{\bbbb^n\cap f^{-1}(N\setminus\overbar{D})\cap \{\MM|Dg|\leq t_i\}} J_f +
\int_{(\bbbb^n_{1+\delta}\setminus\bbbb^n)\cap g^{-1}(N\setminus \overbar{D})\cap \{\MM|Dg|\leq t_i\}} J_g\\
&\geq
\int_{A\cap\{\MM|Dg|\leq t_i\}} J_f - \int_{\bbbb^n_{1+\delta}\setminus\bbbb^n} |J_g|
\xrightarrow{i\to\infty}
\int_A J_f - \int_{\bbbb^n_{1+\delta}\setminus\bbbb^n} |J_g|\stackrel{\eqref{jac pos},\eqref{ucho}}{>}0
\end{split}
\end{equation*}
which contradicts \eqref{eq:contr}.

This concludes the proof of Lemma~\ref{thm main} and hence that of Theorem~\ref{T4}.
\end{proof}


\begin{thebibliography}{99}

\bibitem{acerbif} {\sc Acerbi, E., Fusco, N.:} A approximation
lemma for $W^{1,p}$ functions. {\em in:} Proc.\ of the Symposium on
Material Instabilities and Continuum Mech.\  ed.\ J. M. Ball, pp.\ 1--5,
Oxford Science Publications, 1988.
%
\bibitem{AF}
{\sc  Adams, R. A., Fournier, J. J. F.:}
{\em Sobolev spaces.}
Second edition. Pure and Applied Mathematics (Amsterdam), 140. Elsevier/Academic Press, Amsterdam, 2003.
%
\bibitem{Barden}
{\sc Barden, D.:}
Simply connected five-manifolds.
{\em Ann. of Math.} 82 (1965), 365--385.
%
\bibitem{BIN}
{\sc Besov, O. V., Il'in, V. P., Nikol'ski\u{\i}, S. M.:}
{\em Integral representations of functions and imbedding theorems.} Vol. II.
Scripta Series in Mathematics. Edited by Mitchell H. Taibleson. V. H. Winston \& Sons, Washington, D.C.;
Halsted Press [John Wiley \& Sons], New York-Toronto, Ont.-London, 1979.
%
\bibitem{bonkh}
{\sc Bonk, M., Heinonen, J.:}
Quasiregular mappings and cohomology.
{\em Acta Math.} 186 (2001), 219--238.
%
\bibitem{ccianchi}
{\sc Carozza, M., Cianchi, A.:}
Smooth approximation of Orlicz-Sobolev maps between manifolds.
{\em Potential Anal.} 45 (2016), 557--578.
%
\bibitem{gagliardo}
{\sc Gagliardo, E.:}
Caratterizzazioni delle tracce sulla frontiera relative ad alcune classi di funzioni in $n$ variabili.
{\em Rend.\ Sem.\ Mat.\ Univ.\ Padova} 27 (1957), 284--305.
%
\bibitem{GH_RHS}
{\sc Goldstein, P. , Haj{\l}asz, P.:}
Sobolev mappings, degree, homotopy classes and rational homology spheres.
{\em J. Geom. Anal.}, 22 (2012), 320--338.
%
\bibitem{GHP}
{\sc Goldstein, P., Haj{\l}asz, P.,  Pakzad, M.~R.:}.
Finite distortion Sobolev mappings between manifolds are continuous.
 {\em  Int.\ Math.\ Res.\ Not.\ IMRN}, https://doi.org/10.1093/imrn/rnx251.
%
\bibitem{grecoiss}
{\sc Greco, L., Iwaniec, T., Sbordone, C., Stroffolini, B.:}
Degree formulas for maps with nonintegrable Jacobian.
{\em Topol.\ Methods Nonlinear Anal.} 6 (1995), 81--95.
%
\bibitem{hajlasz1}
{\sc Haj\l{}asz, P.:}
Change of variables formula under minimal assumptions.
{\em Colloq.\ Math.} 64 (1993), 93--101.
%
\bibitem{HIMO}
{\sc Haj{\l}asz, P., Iwaniec, T.,  Mal\'y,  J., Onninen, J.:}
Weakly differentiable mappings between manifolds.
{\em Mem. Amer. Math. Soc.},  192 (2008), no. 899, 2008.
%
\bibitem{hajlaszm}
{\sc Haj\l{}asz, P., Martio, O.:}
Traces of Sobolev functions on fractal type sets and characterization of extension domains.
{\em J. Funct.\ Anal.} 143 (1997), 221--246.
%
\bibitem{Hirsch}
{\sc Hirsch, M. W.:}
{\em Differential topology}. Corrected reprint of the 1976 original. Graduate Texts in Mathematics, 33. Springer-Verlag, New York, 1994.
%
\bibitem{hirschm}
{\sc Hirsch, M. W., Mazur, B.:}
{\em Smoothings of piecewise linear manifolds.}
Annals of Mathematics Studies, No. 80.
Princeton University Press, Princeton, N. J.; University of Tokyo Press, Tokyo, 1974.
%
\bibitem{iwaniecko}
{\sc Iwaniec, T., Koskela, P., Onninen, J.:}
Mappings of finite distortion: monotonicity and continuity.
{\em Invent.\ Math.} 144 (2001), 507--531.
%
\bibitem{IwaniecM}
{\sc Iwaniec, T., Martin, G.:}
{\em Geometric function theory and non-linear analysis}.
Oxford Mathematical Monographs. The Clarendon Press, Oxford University Press, New York, 2001.
%
\bibitem{iwaniecs}
{\sc Iwaniec, T., Sbordone, C.:}
On the integrability of the Jacobian under minimal hypotheses.
{\em Arch.\ Rational Mech.\ Anal.} 119 (1992), 129--143.
%
\bibitem{jost}
{\sc Jost, J.:}
{\em Riemannian geometry and geometric analysis.}
Sixth edition. Universitext. Springer, Heidelberg, 2011.
%
\bibitem{kervairem}
{\sc Kervaire, M. A., Milnor, J. W.:}
Groups of homotopy spheres. I.
{\em Ann.\ of Math.} 77 (1963), 504--537.
%
\bibitem{KondoTanaka}
{\sc Kondo, K., Tanaka, M.:}
Approximations of Lipschitz maps via immersions and differentiable exotic sphere theorems. {\em Nonlinear Anal.} 155 (2017), 219–249.
%
\bibitem{KR}
{\sc Krasnosel'ski\u{\i}, M. A., Ruticki\u{\i}, Ja. B.:}
{\em Convex functions and Orlicz spaces.}
Translated from the first Russian edition by L.~F.~Boron. P. Noordhoff Ltd., Groningen 1961.
%
\bibitem{leoni}
{\sc Leoni, G.:}
{\em A first course in Sobolev spaces. }
Graduate Studies in Mathematics, 105. American Mathematical Society, Providence, RI, 2009.
%
\bibitem{lewis}
{\sc Lewis, J.~L.:}
On very weak solutions of certain elliptic systems.
{\em Comm.\ Partial Differential Equations} 18 (1993), 1515--1537.
%
\bibitem{milnor}
{\sc Milnor, J.:}
On manifolds homeomorphic to the $7$-sphere.
{\em Ann.\ of Math.} 64 (1956), 399--405.
%
\bibitem{MimuraToda}
{\sc Mimura, M., Toda, H.}
{\em Topology of Lie groups. I, II.}
Translated from the 1978 Japanese edition by the authors. Translations of Mathematical Monographs, 91. American Mathematical Society, Providence, RI, 1991.
%
\bibitem{mironescus}
{\sc  Mironescu, P., Sickel, W.:}
A Sobolev non embedding.
{\em Atti Accad.\ Naz.\ Lincei Rend.\ Lincei Mat.\ Appl.} 26 (2015), 291--298.
%
\bibitem{moise}
{\sc Moise, E. E.:}
Affine structures in 3-manifolds. V.
The triangulation theorem and Hauptvermutung.
{\em Ann.\ of Math.} 56 (1952), 96--114.
%
\bibitem{onninenp}
{\sc Onninen, J., Pankka, P.:}
Slow mappings of finite distortion.
{\em Math.\ Ann.} 354 (2012), 685--705.
%
\bibitem{SW}
{\sc Schmitt, B. J., Winkler, M.:}
On embeddings between $BV$ and $W^{s,p}$.
Lehrstuhl I f\"ur Mathematik, RWTH Aachen Preprint no.\ 6, 2000.
%
\bibitem{stein}
{\sc Stein, E.~M.:}
{\em Singular integrals and differentiability properties of functions.}
Princeton Mathematical Series, No. 30 Princeton University Press, Princeton, N.J. 1970
%
\bibitem{triebel}
{\sc Triebel, H.:}
{\em Theory of function spaces.} (Reprint of 1983 edition.)
Modern Birkh\"user Classics. Birkh\"auser/Springer Basel AG, Basel, 2010.
%
\bibitem{VodoGold}
{\sc Vodop'janov, S.~K., Gol'd{\v{s}}te{\u\i}n V.~M.:}
Quasiconformal mappings and spaces of functions with generalized  first derivatives.
{\em Sibirsk. Mat. \v Z.},  17 (1976), 515--531, 715.
%
\bibitem{RR}
{\sc Rao, M. M., Ren, Z. D.:}
{\em Theory of Orlicz spaces.}
Monographs and Textbooks in Pure and Applied Mathematics, 146. Marcel Dekker, Inc., New York, 1991.
%
\bibitem{Ruberman}
{\sc Ruberman, D.:}
Null-homotopic embedded spheres of codimension one.
{\em in:} Tight and taut submanifolds ({B}erkeley, {CA}, 1994),
  volume~32 of {\em Math. Sci. Res. Inst. Publ.}, pp.\ 229--232. Cambridge
  Univ. Press, Cambridge, 1997.
%
\bibitem{Saveliev}
{\sc Saveliev, N.:}
{\em Invariants for homology 3-spheres.}
Encyclop\ae{}dia of Mathematical Sciences, 140. Low-Dimensional Topology, I. Springer-Verlag, Berlin, 2002.
%
\bibitem{Sullivan}
{\sc Sullivan, D.:}
Hyperbolic geometry and homeomorphisms. {\em  Geometric topology} (Proc. Georgia Topology Conf., Athens, Ga., 1977), pp.\ 543--555, Academic Press, New York-London, 1979.
%
\bibitem{VaisalaTukia}
{\sc Tukia, P., Väisälä, J.:}
Lipschitz and quasiconformal approximation and extension.
{\em Ann. Acad. Sci. Fenn. Ser. A I Math.} 6 (1981), 303--342 (1982).
\end{thebibliography}
\end{document}